\documentclass[12pt]{article}
\usepackage[utf8]{inputenc}
\usepackage{amsmath}
\usepackage{amsfonts}
\usepackage{amsthm}
\usepackage{amssymb}
\usepackage{mathtools}
\usepackage[T1]{fontenc}
\usepackage{pst-node}
\usepackage{tikz-cd}
\usepackage{caption}
\usepackage{bbm}
\usepackage{comment}
\usepackage{array}   
\newcolumntype{L}{>{$}l<{$}}
\usepackage{multirow}
\usepackage{txfonts}
\usepackage{hyperref}
\usepackage{adjustbox}

\usepackage{tocloft}

\usetikzlibrary{calc,backgrounds}

\pgfdeclarelayer{background}
\pgfdeclarelayer{foreground}
\pgfsetlayers{background,main,foreground}

\usepackage{geometry}
\theoremstyle{plain}
\newtheorem{thm}{Theorem}[section]

\newtheorem{lem}[thm]{Lemma}
\newtheorem{prop}[thm]{Proposition}
\newtheorem{cor}[thm]{Corollary}

\newtheorem{ques}{Question}
\newtheorem{rem}{Remark}

\newtheorem{mythm}{Theorem}

\theoremstyle{definition}
\newtheorem{defn}[thm]{Definition}
\newtheorem{example}[thm]{Example}

\newcommand{\R}{\mathbb{R}}

\newcommand{\Z}{\mathbb{Z}}
\newcommand{\Q}{\mathbb{Q}}
\newcommand{\na}{\mathbb{N}}

\renewcommand{\Im}{\mathrm{im} \, }

\newcommand{\diag}{\mathrm{diag}}

\renewcommand{\cong}{\mathrm{\simeq}}

\newcommand{\g}{\gamma}
\newcommand{\G}{\Gamma}
\renewcommand{\d}{\delta}
\newcommand{\D}{\Delta}

\renewcommand{\l}{\lambda}

\newcommand{\s}{\sigma}
\renewcommand{\S}{\Sigma}
\renewcommand{\t}{\tau}
\renewcommand{\H}{\mathcal{H}}

\newcounter{antoniocomments}

\newcounter{piotrcomments}

\title{Cocycles and positive functionals in higher cohomology}

\author{Antonio López Neumann and Piotr W. Nowak}

\date{}

\begin{document}

\maketitle

\begin{abstract}
We establish and explore the correspondence between positive functionals and cocycles in higher unitary cohomology. We generalize the classical cocycle version of the Gelfand-Naimark-Segal construction to higher degrees and apply it to characterize vanishing of higher unitary cohomology as an extension property for positive functionals. We also prove that under mild conditions the algebraic spectral gap for the one sided Laplacian characterizes cohomological vanishing instead of reducedness of unitary cohomology.


\end{abstract}


\section*{Introduction}

    Property $(T)$ is a central notion in analytic group theory with many applications to other areas of mathematics, including operator algebras, combinatorics, ergodic theory or dynamics. 
   For a group $\G$ generated by a finite symmetric generating set $S$, the Laplacian
       $$ \D = 1 - \frac{1}{|S|} \sum_{s \in S} s,$$
viewed as an element of the real group ring  $\R \G$, plays a central role in studying property $(T)$. 
It is easy to see that the group $\G$ has property $(T)$ if and only if for every unitary representation $\pi$ of $\G$, zero is an isolated point in the spectrum of the Laplacian $\pi(\D)$  \cite[5.4.5]{bekka-harpe-valette}.

    In \cite{ozawa-property-T}, Ozawa showed a remarkable fact, that for groups with property $(T)$ this spectral gap is in fact witnessed on the level of the group ring $\R\G$, via the equation
    $$\D ^2 - \l \D =\sum_{i=1}^n\xi_i^*\xi_i,$$ 
    for some $\l >0$ and some finite collection of $\xi_i\in \R\G$.
    The above characterization enables using computer 
    assistance in proving property $(T)$. This approach allowed later to prove property $(T)$ for $\mathrm{Aut}(F_n)$ for $n \geq 5$ \cite{kaluba-nowak-ozawa, kaluba-kielak-nowak} as well as n=4 \cite{nitsche}.

In recent years there has been increasing interest in generalizing property $(T)$-type phenomena to higher cohomology, see e.g. \cite{bader-nowak}, \cite{DCGLT}, \cite{bader-sauer}, \cite{bader-sauer-survey}.
Property $(T)$ has a couple of equivalent cohomological descriptions, one in terms of the first cohomology $H^1(\G,\pi)$ being reduced (i.e. the image of the codifferential being closed) for every unitary representation
$\pi$ of $\G$, and the other in terms of vanishing of $H^1(\G,\pi)$, again for every unitary representation $\pi$. The equivalence between these two conditions is known as the Delorme-Guichardet theorem (see \cite[2.12.4]{bekka-harpe-valette}). 
This allows for two different routes of generalizing property $(T)$ to higher cohomology, one via reducedness, and another one via vanishing, see \cite[Section 5.1]{bader-nowak1} and \cite{bader-sauer}.
As it turns out, in general these conditions are no longer equivalent for higher cohomology.
Indeed, for a lattice $\G$ in $\mathrm{PGL}_{n+1}(\Q_p)$ the cohomology spaces $H^n(\G,\pi)$ are always Hausdorff but not necessarily zero \cite[Proposition 19]{bader-nowak}.

In \cite{bader-nowak} a characterization of vanishing of unitary cohomology in higher degrees was proved in terms of an algebraic spectral gap of the cohomological Laplacian $\D_n=d_n^*d_n+d_{n-1}d_{n-1}^*$, where $d_n$ is the appropriate group codifferential. 
This characterization in particular allowed again for the use of computer-assisted methods for proving vanishing of cohomology with unitary coefficients.
This approach was implemented in \cite{kaluba-mizerka-nowak}.

For the parallel question of whether unitary cohomology is always reduced, there is an analytic characterization in terms of the spectral gap of the one-sided Laplacian $\D_n^+=d_n^*d_n$ , see \cite{bader-nowak}.
However, its algebraic counterpart, the condition that $\Delta_n^+$ has algebraic spectral gap, remained only a sufficient condition for the cohomology being reduced. 
The question whether reducedness of unitary cohomology can be characterized in terms of positivity in the matrices over the group algebra served as starting point of this work. 
In particular, it was expected that the analytic characterization 
of reducedness from \cite{bader-nowak} has an algebraic counterpart. 
The following theorem gives the unexpected result that this is not the case and the algebraic version of the characterization from \cite{bader-nowak} is sufficiently strong to imply vanishing of cohomology. 

    \begin{mythm}\label{Intro Theorem: Higher Ozawa characterization for higher Vanishing} (Theorem \ref{theorem: Higher Ozawa equation is equivalent to Higher (T)})
    Let $\G$ be a group of type $F$ and $n \geq (\mathrm{cdim}_\mathbb{Q}(\G)+1) /2$. The following conditions are equivalent: 
    \begin{enumerate}
        \item[(1)]  $H^n (\G, \pi) = 0$ for every unitary representation $(\pi, \H)$ of $\G$.
        \item[(2)] There exists $\l >0$ and a finite collection $M_i\in\mathbb{M}_{k_{n-1}} (\R \G)$ such that 
        $$(\D_{n-1}^+)^2 - \l \D_{n-1}^{+} = \sum M_i^*M_i.$$
    \end{enumerate}
    \end{mythm}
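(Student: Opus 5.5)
We first fix the setting. Since $\G$ is of type $F$, there is a finite free resolution of $\Z$ (equivalently, of $\R$) by free $\R\G$-modules $\R\G^{k_0}\leftarrow \R\G^{k_1}\leftarrow\cdots$. For every unitary representation $\pi$, the cochain complex computing $H^\ast(\G,\pi)$ is $\H^{k_0}\xrightarrow{d_0}\H^{k_1}\xrightarrow{d_1}\cdots$, where each $d_j$ is given by a matrix over $\R\G$. We set $\D_{n-1}^+=d_{n-1}^*d_{n-1}\in\mathbb{M}_{k_{n-1}}(\R\G)$.

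\emph{The implication (2)$\Rightarrow$(1).} We apply $\pi$ to the identity in (2) and use that $\pi(\sum M_i^*M_i)\geq 0$. This gives $\pi(\D_{n-1}^+)^2\geq \l\,\pi(\D_{n-1}^+)$, so the spectrum of the positive operator $\pi(\D_{n-1}^+)$ is contained in $\{0\}\cup[\l,\infty)$. Consequently $d_{n-1}$ has closed range in every unitary representation, i.e. $H^n(\G,\pi)$ is reduced for all $\pi$. This is a reducedness statement only. To reach vanishing, we will use the degree hypothesis $n\geq(\mathrm{cdim}_\Q\G+1)/2$, combined with duality. Reduced cohomology satisfies $\overline H^n(\G,\pi)\cong \ker \D_n$, and it pairs with homology of the contragredient representation. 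For $\G$ of type $F$, the range $n\ge (\mathrm{cd}+1)/2$ means the complementary degree $\mathrm{cd}-n$ is at most $n-1$. We want to show that a nonzero harmonic $n$-cochain in some $\pi$ produces an almost-invariant failure of the gap in degree $n-1$ in a related representation, for instance $\pi\otimes\bar\pi$ or an ultraproduct.

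\emph{Where we expect the real content to be.} We do not expect the previous paragraph to be the real argument. Instead we expect it to come from the positive functional / GNS correspondence that the paper develops. The plan for (2)$\Rightarrow$(1) is to assume $H^n(\G,\pi)\neq0$, so there is a cocycle that is not a coboundary, and to build a positive functional on $\mathbb{M}_{k_{n-1}}(\R\G)$ vanishing on $\D_{n-1}^+$-type elements that violates the extension property. Evaluating the identity in (2) on this functional then gives a contradiction. Concretely, the cocycle-version GNS construction gives a cocycle $\leftrightarrow$ conditionally positive functional dictionary. A class in $H^n$ which fails to vanish gives a positive functional $\phi$ with $\phi((\D_{n-1}^+)^2)=0$ but $\phi(\D_{n-1}^+)>0$. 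Applying $\phi$ to (2) yields $0-\l\phi(\D^+_{n-1})=\sum\phi(M_i^*M_i)\geq0$, a contradiction.

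\emph{The implication (1)$\Rightarrow$(2).} We follow Ozawa's Hahn--Banach argument. Consider the cone $\Sigma^2$ of sums of hermitian squares in $\mathbb{M}_{k_{n-1}}(\R\G)$, with the Archimedean order unit coming from the finite generation. Suppose that for no $\l>0$ is $(\D_{n-1}^+)^2-\l\D_{n-1}^+\in\Sigma^2$. Separating by a state, and then taking a limit as $\l\to0$ using compactness of states, we obtain a state $\phi$ with $\phi((\D^+_{n-1})^2)=0$ but $\phi(\D_{n-1}^+)\ne 0$. The GNS construction of $\phi$ gives a unitary representation $\pi$ and a vector $v\in\H^{k_{n-1}}$ with $d_{n-1}^*d_{n-1}\,\pi(\D^+_{n-1})v=0$. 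Then $w=\pi(d_{n-1})v$ satisfies $d_{n-1}^*w=0$ and $w\ne0$, so $w$ lies in $\overline{\Im d_{n-1}}\cap\ker d_{n-1}^*$. In particular $H^n$ is non-reduced, or $H^{n-1}$ is nonzero in the dual sense. In the first case $H^n(\G,\pi)\neq0$, contradicting (1). In the second case we need vanishing of $H^{n}$, or of the homology in the complementary degree, to contradict the existence of $w$; this is where the bound on $n$ via $\mathrm{cdim}_\Q$ and Poincaré-type duality enters.

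The main obstacle is the step (2)$\Rightarrow$(1): upgrading the algebraic gap, which by itself only forces reducedness, to genuine vanishing. We would attack it first through the higher GNS/positive functional correspondence and the extension characterization of vanishing, and only then via duality using the dimension bound.
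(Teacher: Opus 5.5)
\textbf{(2)$\Rightarrow$(1).} Your reducedness step is correct, and the paper uses it at the end. Your final computation $-\l\phi(\D_{n-1}^+)\ge 0$ also has the right shape. The trouble is that the functional it relies on is only asserted, and as you phrase it, it cannot exist. No positive functional on all of $\mathbb{M}_{k_{n-1}}(\R\G)$ has $\phi((\D_{n-1}^+)^2)=0<\phi(\D_{n-1}^+)$, because Cauchy--Schwarz gives $\phi(\D_{n-1}^+)^2\le\phi((\D_{n-1}^+)^2)\,\phi(I)$.

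The functional attached to a harmonic cocycle $z$ lives only on $\operatorname{im}D_{n-1}$, via $\phi(d_{n-1}^*ad_{n-1})=\langle\pi(a)z,z\rangle$. The whole point is to prove $\phi(\sum_i M_i^*M_i)\ge0$, and this needs two ingredients you do not supply.
\begin{enumerate}
\item[(a)] Since $\sum_i M_i^*M_i=D_{n-1}(\D_n-\l I)\in\ker D_{n-2}$, you get $M_id_{n-2}=0$. Acyclicity of $\square d_j$ then gives $M_i=b_id_{n-1}$, so $\sum_iM_i^*M_i=D_{n-1}(\sum_ib_i^*b_i)$ (Proposition~\ref{SOS in Im D_k are images of SOS}).
\item[(b)] $\phi$ must be well defined. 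Concretely, you need $\langle\pi(c)z,z\rangle=0$ for $c=\sum_ib_i^*b_i-\D_n+\l I\in\ker D_{n-1}$. Only then does $\sum_i\|\pi(b_i)z\|^2=-\l\|z\|^2$ follow.
\end{enumerate}
Cocycle matrix coefficients vanish on $\mathcal{SC}_n=\{d_n^*x+yd_n\}$. The dimension hypothesis is used precisely to prove $\ker D_{n-1}=\mathcal{SC}_n$ (Theorem~\ref{theorem: ker D_i=SC_i}). That proof is a zig-zag through the acyclic complexes $d_j^*\square$ and $\square d_j$. It terminates at $d_{\mathrm{cd}}^*=0$, which is reached before the lower index drops below $0$ exactly when $n\ge(\mathrm{cd}+1)/2$. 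This is the missing idea; the hypothesis does not enter through Poincar\'e-type duality.

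\textbf{(1)$\Rightarrow$(2).} This argument is broken. If you separate with states on $\mathbb{M}_{k_{n-1}}(\R\G)$ normalized at $I$, the same Cauchy--Schwarz bound gives $\phi_\l(\D_{n-1}^+)\le\l$. Hence any limit state has $\phi(\D_{n-1}^+)=0$, not $\neq 0$. Consistently, your vector $w=\pi(d_{n-1})v$ with $\pi(d_{n-1})^*w=0$ is automatically zero, since $\|w\|^2=\langle v,\pi(d_{n-1})^*w\rangle$. So the case analysis that follows is vacuous.

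The paper argues directly. Condition (1), with Bader--Sauer supplying reducedness in degree $n+1$, gives $\D_n-\l I\in\S^2\mathbb{M}_{k_n}(\R\G)$ by Bader--Nowak's theorem. Then $D_{n-1}(\D_n-\l I)=(\D_{n-1}^+)^2-\l\D_{n-1}^+$ is a sum of squares, because $d_{n-1}^*x^*xd_{n-1}=(xd_{n-1})^*(xd_{n-1})$ (Lemma~\ref{Squares of non-square matrices}).

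If you prefer an Ozawa-style proof, do the separation inside $(\operatorname{im}D_{n-1})^h$ against the cone $D_{n-1}(\S^2\mathbb{M}_{k_n}(\R\G))$. This cone has order unit $\D_{n-1}^+=D_{n-1}(I)$. A limit of separating functionals normalized by $\phi(\D_{n-1}^+)=1$ then satisfies $\phi((\D_{n-1}^+)^2)=0$. Since $D_{n-1}(\D_n)=(\D_{n-1}^+)^2$, applying GNS to $\phi\circ D_{n-1}$ produces a unit-norm harmonic $n$-cochain, contradicting (1). Neither route uses the dimension hypothesis.
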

The above result gives a complete higher dimensional generalization of Ozawa's characterization of property $(T)$. See Theorem \ref{theorem: Higher Ozawa equation is equivalent to Higher (T)} for a version with more general hypotheses.

In order to prove the above, new techniques were necessary and this had led us to exploring the connection between positive functionals, their extensions, and cocycles in group cohomology. As a result, we establish a higher-dimensional version of the celebrated Gelfand-Naimark-Segal construction. For a group $\G$, the classical cocycle GNS construction provides 
a correspondence between positive definite functionals on the augmentation ideal on one side, and 1-cocycles of unitary representations on the other. The correspondence is expressed via the formula 
$$\phi(1- \g)= ||z(\g)||^2 ,$$ 
 where $\g \in \G$, $\pi$ is a unitary representation, $z$ an associated 1-cocycle and $\phi$ is a positive functional.

In order to generalize the above cocycle version of the GNS construction we consider a certain chain complex on algebras of matrices $\mathbb{M}_n(\R\G)$ with coefficients in $\R\G$,  induced by the group cochain complex with coefficients in $\R\G$.
This matricial chain complex is not acyclic in general and it is a natural question when does its homology vanish. 
For us, matricial cycles and boundaries serve as a natural environment for generalizing the GNS construction. 
Our higher GNS construction, formulated in Theorems \ref{higher GNS} and \ref{higher GNS - image version}, gives a correspondence between $n$-cocycles $z$ in the cohomology $H^n(\G,\pi)$, where $\pi$ is a unitary representation, and functionals defined by the matrix coefficient $\langle \pi(a)z,z\rangle$:

 \begin{center}
    \begin{tikzcd}[scale=0.7em]
    
    \begin{Bmatrix}
         \phi \text{ positive functional on}   \\
      \text{ matricial boundaries}
    \end{Bmatrix}
    
    \arrow[r, leftrightarrow] 
    
    	 & \begin{Bmatrix}
         z \in Z^n (\G, \pi),   \\
        \pi \text{ unitary $\G$-representation}
    \end{Bmatrix} 
  \end{tikzcd}
\end{center}

An important technical element, formulated as Theorem \ref{theorem: ker D_i=SC_i}, is a description of matricial cycles as sums of images of one-sided differentials that we call \emph{split-conjugation spaces}  (see Definition \ref{Definition: split conjugation}). For this we use a homological telescoping argument together with acyclicity 
of the underlying group chain complex. 

As one of the consequences we prove a new characterization of vanishing of higher unitary cohomology via an extension property for positive functionals defined on the matricial coboundaries.

\begin{mythm} (Theorem \ref{Vanishing of cohomology vs extension of functionals}) Let $\Gamma$ be a group of type $F$ and $n \geq (\mathrm{cdim}_\mathbb{Q}(\G)+1) /2$. The following conditions are equivalent:
    \begin{enumerate}
        \item[(1)]  $H^n (\G, \pi) = 0$ for every unitary representation $(\pi, \H)$ of $\G$.
    \item[(2)]  Every positive hermitian functional $\phi:\operatorname{im} D_{n-1} \to \R$ can be extended to a positive hermitian functional $\overline{\phi}:\mathbb{M}_{k_{n-1}}(\R\G)\to\R$.
    \end{enumerate}
\end{mythm}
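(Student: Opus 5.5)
The plan is to prove both implications through the higher GNS correspondence (Theorems \ref{higher GNS} and \ref{higher GNS - image version}). That correspondence matches a positive hermitian functional $\phi$ on $\operatorname{im} D_{n-1}$ with a pair $(\pi,z)$, where $z\in Z^n(\G,\pi)$ and
$$\phi(D_{n-1}(a))=\langle \pi(a)z,z\rangle$$
on matricial boundaries. I will also use the following fact, which I expect to be the matricial GNS construction on $\mathbb{M}_{k_{n-1}}(\R\G)$. A positive hermitian functional $\overline{\phi}$ on the whole matrix algebra is the same thing as a vector $\xi\in \H^{k_{n-1}}$, viewed as an $(n-1)$-cochain, for a unitary representation $\pi$, via
$$\overline\phi(A)=\langle \pi(A)\xi,\xi\rangle .$$
The guiding idea is that $\overline{\phi}$ extends $\phi$ exactly when $z=d_{n-1}\xi$, up to replacing the representation by one containing it.

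For $(1)\Rightarrow(2)$, take $\phi$ positive hermitian on $\operatorname{im} D_{n-1}$ and let $(\pi,z)$ be its GNS cocycle. By (1), $z=d_{n-1}\xi$ for some $\xi\in C^{n-1}(\G,\pi)=\H^{k_{n-1}}$. Define $\overline\phi(A)=\langle\pi(A)\xi,\xi\rangle$ on $\mathbb{M}_{k_{n-1}}(\R\G)$. It is positive and hermitian because $\pi$ is unitary and $\xi$ is a fixed vector. It remains to check that it agrees with $\phi$ on $\operatorname{im}D_{n-1}$. Writing the matricial differential in the form used in the GNS theorem (roughly $D_{n-1}(a)=d_{n-1}^*\,a\,d_{n-1}$), this reduces to the identity
$$\langle\pi(d_{n-1}^*ad_{n-1})\xi,\xi\rangle=\langle\pi(a)d_{n-1}\xi,d_{n-1}\xi\rangle=\langle\pi(a)z,z\rangle .$$
One caveat: if the GNS theorem only recovers $\phi$ on a generating subset, I will extend by linearity using the split-conjugation description, Theorem \ref{theorem: ker D_i=SC_i}.

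For $(2)\Rightarrow(1)$, fix a unitary representation $\pi$ and a cocycle $z\in Z^n(\G,\pi)$. Set $\phi(D_{n-1}(a))=\langle\pi(a)z,z\rangle$. Well-definedness and positivity are what the image version of higher GNS supplies, and the dimension hypothesis $n\ge(\mathrm{cdim}_\Q\G+1)/2$ is used here, presumably so that matricial cycles coincide with split-conjugation spaces on which the formula vanishes. Extend $\phi$ by (2) to $\overline\phi$ and apply GNS to get $(\sigma,\xi)$ with $\overline\phi(A)=\langle\sigma(A)\xi,\xi\rangle$. Then $\eta=d_{n-1}\xi\in Z^n(\G,\sigma)$ is a coboundary, and $\eta$ and $z$ have the same functional on $\operatorname{im}D_{n-1}$.

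The main obstacle is the final step of $(2)\Rightarrow(1)$: deducing that $z$ itself is a coboundary in $\pi$, since the identity of functionals only relates the cyclic subrepresentations they generate. The uniqueness part of the GNS correspondence should give a unitary intertwiner $U$ between the cyclic subrepresentations generated by the entries of $z$ and of $\eta$, with $U\eta=z$. The difficulty is that $\xi$ need not lie in the cyclic subspace generated by $\eta$. To handle this, I will first try projecting: let $P$ be the orthogonal projection onto the $\sigma$-invariant subspace generated by the entries of $\eta$. It commutes with $\sigma$, so it commutes with $d_{n-1}$, and hence
$$d_{n-1}(P\xi)=P\eta=\eta .$$
Then $z=U\eta=d_{n-1}(UP\xi)$, so $z$ is a coboundary. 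Since $\pi$ and $z$ were arbitrary, this gives $H^n(\G,\pi)=0$ for every unitary $\pi$, which is (1).
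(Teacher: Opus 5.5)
Your proof is correct. For $(1)\Rightarrow(2)$ it is the paper's argument (Proposition \ref{Coboundaries vs extending functionals}). Your caveat about generating subsets is unnecessary: every element of $\operatorname{im} D_{n-1}$ has the form $d_{n-1}^*ad_{n-1}$, and the GNS identity holds for every $a$.

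For $(2)\Rightarrow(1)$ you take a genuinely different route.

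\textbf{The paper's route.} The paper applies the extension property only to \emph{harmonic} cocycles $z\in\ker\pi(\D_n)$. Writing $\overline\phi(A)=\langle\pi'(A)c,c\rangle$, it uses $(\D_{n-1}^+)^2=D_{n-1}(\D_n^-)$ to get $\|\pi'(\D_{n-1}^+)c\|^2=\langle\pi(\D_n)z,z\rangle=0$. Hence $\|z\|^2=\phi(\D_{n-1}^+)=\langle\pi'(\D_{n-1}^+)c,c\rangle=0$, which gives $\overline{H}^n(\G,\pi)=0$ for every $\pi$. It then cites the Bader--Sauer argument \cite[Theorem 3.7]{bader-sauer} to pass from vanishing of reduced cohomology for all unitary representations to vanishing of $H^n$.

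\textbf{Your route.} You work with an arbitrary cocycle and combine three steps, each of which checks out:
\begin{enumerate}
\item the identity $\langle\sigma(a)\eta,\eta\rangle=\langle\pi(a)z,z\rangle$ for all $a\in\mathbb{M}_{k_n}(\R\G)$;
\item the uniqueness part of Proposition \ref{GNS for matrices of group algebras}, applied to the cyclic subrepresentations spanned by the $\G$-orbits of the entries of $\eta$ and of $z$;
\item the fact that the equivariant projection $P$ commutes with $\sigma(d_{n-1})$.
\end{enumerate}
Together these give $z=\pi(d_{n-1})(UP\xi)$. Note that $UP\xi$ lies in the cyclic subspace for $\pi$, where $\pi(d_{n-1})$ is the restriction of the ambient operator.

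\textbf{What each buys.} Your version is self-contained: it uses no Hodge theory and no appeal to Bader--Sauer. It also proves a sharper, cocycle-by-cocycle statement. Under $\mathcal{SC}_n=\ker D_{n-1}$, a given $z\in Z^n(\G,\pi)$ is a coboundary in $\pi$ if and only if its own functional admits a positive hermitian extension. This is the converse of Proposition \ref{Coboundaries vs extending functionals} alluded to in the remark about Bader and Bader. The paper's version is shorter given the citation and mirrors its proof of Theorem \ref{theorem: Higher Ozawa equation is equivalent to Higher (T)}. However, it only yields the global statement, because the upgrade from $\overline{H}^n$ to $H^n$ needs the hypothesis for all unitary representations at once.
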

Above, $D_{n-1}$ is the appropriate matricial differential operator and its image is the space of matricial boundaries. 

We also apply our methods to discuss generalizations of  Shalom's property $H_T$ 
to higher degrees. Namely, we obtain characterizations of a higher version of property $H_T$ in the spirit of \cite[Corollary 12]{ozawa-substitute}. See Theorems \ref{Characterization of higher H_T} and \ref{Higher analogue of Ozawa's characterization}.

 \subsection*{Acknowledgements.} 

Both authors were supported by the National Science Center Grant Maestro-13 UMO-2021/42/A/ST1/00306.
The first author was also supported by Fondation Sciences Mathématiques de Paris.
The second author would like to thank Piotr Mizerka for many discussions on topics related to the subject of this paper. 

We thank Uri Bader and Saar Bader for their interest in this work and helpful comments.

\tableofcontents

\section{Matrices over group algebras}

We first collect some definitions and notations on $*$-algebras. We refer to \cite{bader-nowak}, \cite{cimprivc2009}, \cite{netzer-thom} or \cite{schmudgen-book} for details.

A \textit{$*$-algebra} $A$ is an algebra over $\R$ equipped with an involution $x \mapsto x^*$ that is $\R$-linear and such that $(ab)^* = b^*a^*$ for $a, b \in A$. For a subspace $V\subseteq A$ we define the cone of \textit{sums of hermitian squares} by
\begin{equation*}
    \S^2V : = \left\lbrace\sum_{i=1}^n a_i^* a_i \colon a_i \in V,  n \in \na \right\rbrace.
\end{equation*}

Given a $*$-algebra $A$ and a subspace $V\subseteq A$, we denote by $V^h = \{ a \in V : a^* = a \}$ the subspace of \textit{self-adjoint} elements of $V$. We have $\S^2 V \subset V^h$. An \textit{order unit} $\d \in \S^2 A$ is an element such that for every $a \in A^h$ there exists $R>0$ such that $a + R \d \in \S^2 A $.

We say that a $*$-algebra is \textit{Archimedean} if it is unital (with unit denoted by $1$), $-1 \notin \S^2 A$ and $1$ is an order unit of $A$.

Given two $*$-algebras (resp. unital $*$-algebras) $A$ and $B$, a map $f: A \to B$ is a \emph{$*$-homomorphism} (resp. a \emph{unital $*$-homomorphism}) if it is an algebra homomorphism and $f(a^*) = f(a)^*$ for $a \in A$ (resp. satisfying furthermore $f(1_A) = 1_B$).

Given a $*$-algebra $A$, we say that a linear functional $\varphi: A \to \R$ is \textit{hermitian} if $\varphi(a ) = \varphi(a^*)$ for every $a \in A$. Such a functional is characterized by its restriction to $A^h$. We say that a linear functional $\varphi: A \to \R$ is \textit{positive} if $$\varphi (\S^2 A) \geq 0.$$ 

Our $*$-algebras will be endowed with the so-called \textit{algebraic topology} as in \cite{ozawa-property-T}, defined as follows.
Let $V$ be a real vector space and $C \subset V$ a convex subset. We say that $c \in C$ is an \textit{algebraic interior point} if for every $v \in V$, there exists $t \in (0,1]$ such that $(1-t)c + tv \in C$. We denote by $\mathrm{int }(C)$ the set of all interior points of $C$. We define a locally convex topology on $V$ by defining open sets to be all convex subsets $C$ such that $C = \mathrm{int}(C)$. In this topology all linear functionals are continuous and all vector subspaces are closed.

If we endow a $*$-algebra $A$ (or $A^h$) with this topology and $\d$ is an order unit such that $- \d \notin \S^2 A$, we have 
\begin{equation*}
    \overline{\S^2 A} = \{ a \in A^h\colon  \forall \varepsilon>0,   a + \varepsilon \d \in \S^2 A \}
\end{equation*}
and if $W$ is a real vector subspace of $A^h$, then
\begin{equation*}
    \overline{\S^2 A + W} = \{ a \in A^h,   \forall \varepsilon>0,   a + \varepsilon \d \in \S^2 A + W \}.
\end{equation*}


We now turn to group algebras.
Recall that given a countable discrete group $\G$, the group algebra $\R \G$ is the vector space over $\R$ of finite linear combinations $\sum_{\gamma\in \G} a_\gamma \gamma$ of elements of $\G$, with the product given by the linear extension of the group product. It is a unital $*$-algebra when endowed with the involution $\sum a_\g \g \in \R \G \mapsto \sum a_\g \g^{-1}$ and unit $e_\G$. The subspace 
$$ I (\G) := \left\{ \sum_{\g \in \G} a_\g \g \in \R \G \colon \sum_{\g \in \G} a_\g  = 0 \right\} $$
is an ideal, called the \emph{augmentation ideal} of $\R \G$.
One can show that $\R \G$ is an Archimedean $*$-algebra \cite[Example 3]{cimprivc2009} and hence for $k \geq 1$ the algebra $\mathbb{M}_k(\R \G)$ 
of matrices with coefficients in the group algebra is also an Archimedean $*$-algebra \cite[Corollary 13]{bader-nowak}, with $*$-involution given by taking the transpose and applying the $*$-involution of $\R \G$ on each entry.

A \textit{$*$-representation} $(\pi, \H)$ of $\mathbb{M}_k(\R \G)$ consists of a Hilbert space $\H$ and a unital $*$-homomorphism $\pi : \mathbb{M}_k(\R \G) \to \mathcal{L}(\H)$ with values in the $*$-algebra of linear operators on $\H$. We say that the $*$-representation $\pi : \mathbb{M}_k(\R \G) \to \mathcal{L}(\H)$ is \textit{bounded} if the image $\pi(\mathbb{M}_k(\R \G))$ lies in the subspace of bounded operators $\mathrm{B}(\H)$. 

The following Proposition says that $*$-representations of $\mathbb{M}_k(\R \G)$ are always induced by unitary representations of the underlying group $\G$.

\begin{prop} \cite[Proposition 12]{bader-nowak} \label{*-reps of matrices decompose as a unirep coordinate-wise}
    Let $k \geq 1$ and $\rho: \mathbb{M}_k(\R \G) \to \mathrm{B}(\mathcal{K})$ be a bounded $*$-representation of $\mathbb{M}_k(\R \G)$. Then there exists a unitary representation $\pi: \G \to \mathrm{U}(\H)$ of $\G$ and a Hilbert space isometric isomorphism $\theta: \mathcal{K} \to \H^k$ such that for $a = (a_{ij}) \in \mathbb{M}_k(\R \G)$ we have
    \begin{equation*}
        \rho(a)  =\theta^{-1}   \circ \pi(a_{ij}) \circ \theta.
    \end{equation*}
\end{prop}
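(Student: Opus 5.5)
The plan is to reduce to the scalar case $k=1$ by means of the matrix units. Let $e_{ij}\in\mathbb{M}_k(\R\G)$ be the matrix with $e_\G$ in entry $(i,j)$ and $0$ elsewhere, and set $P_i=\rho(e_{ii})$. The relations $e_{ii}^*=e_{ii}=e_{ii}^2$, $\sum_i e_{ii}=1$ and unitality of $\rho$ show that the $P_i$ are pairwise orthogonal projections summing to the identity. Hence $\mathcal{K}=\bigoplus_i P_i\mathcal{K}$. Next, $V_i:=\rho(e_{i1})$ restricts to a partial isometry from $P_1\mathcal{K}$ onto $P_i\mathcal{K}$, because $e_{i1}^*e_{i1}=e_{11}$ and $e_{i1}e_{i1}^*=e_{ii}$. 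So every summand is isometrically identified with $\H:=P_1\mathcal{K}$.

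On $\H$ define $\pi(\g):=\rho(\g e_{11})|_{\H}$ for $\g\in\G$, where $\g e_{11}$ means the matrix with $\g$ in entry $(1,1)$. The map $a\mapsto a e_{11}$ is a $*$-homomorphism from $\R\G$ into the corner $e_{11}\mathbb{M}_k(\R\G)e_{11}$, with unit $e_{11}$, and $\rho(e_{11})$ is the identity on $\H$. From this, $\pi$ is a unital $*$-homomorphism of $\R\G$ into $\mathrm{B}(\H)$. In particular each $\pi(\g)$ satisfies $\pi(\g)^*\pi(\g)=\pi(\g^{-1}\g)=1=\pi(\g)\pi(\g)^*$, so it is unitary, and $\pi$ is a unitary representation of $\G$. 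This is the step where boundedness is used: it guarantees that the operators live in $\mathrm{B}(\H)$, so that adjoints make sense.

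Define $\theta:\mathcal{K}\to\H^k$ by $\theta(v)=(V_1^*P_1v,\dots,V_k^*P_kv)=(\rho(e_{1i})v)_i$. Orthogonality of the decomposition and the partial isometry relations make $\theta$ an isometric isomorphism, with inverse $(h_i)_i\mapsto\sum_i\rho(e_{i1})h_i$.

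It remains to check that $\rho(a)=\theta^{-1}\circ(\pi(a_{ij}))\circ\theta$. By linearity it suffices to take $a=\g e_{ij}$, a single group element in one entry. Write $\g e_{ij}=e_{i1}(\g e_{11})e_{1j}$. Then for $h$ in the $j$-th summand,
$$\rho(\g e_{ij})\,\rho(e_{j1})h=\rho(e_{i1})\,\rho(\g e_{11})h=\rho(e_{i1})\,\pi(\g)h,$$
which under $\theta$ is exactly $\pi(\g)h$ placed in coordinate $i$. This is what the matrix $(\pi(a_{ij}))$ does.

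I expect the only real obstacle to be bookkeeping: the order of indices in $\theta$, and verifying that the corner embedding is multiplicative. I will settle both by the identity $e_{ab}e_{cd}=\delta_{bc}e_{ad}$ together with the fact that scalars in $\R\G$ commute with the matrix units.
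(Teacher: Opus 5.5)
The paper gives no proof of this proposition: it quotes \cite[Proposition 12]{bader-nowak} without argument. There is therefore nothing in the paper to compare your route against. Your matrix-unit reduction is a correct, self-contained proof, and as far as I can tell it is the standard one.

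The individual steps check out.
\begin{itemize}
\item The $P_i=\rho(e_{ii})$ are mutually orthogonal projections summing to the identity. This genuinely needs unitality of $\rho$, which is built into the paper's definition of a $*$-representation, and you use it correctly.
\item The map $\theta(v)=(\rho(e_{1i})v)_i$ is isometric, because $\sum_i\|\rho(e_{1i})v\|^2=\sum_i\langle P_iv,v\rangle=\|v\|^2$. Its inverse is $(h_i)_i\mapsto\sum_i\rho(e_{i1})h_i$.
\item The map $a\mapsto\rho(ae_{11})|_{\H}$ is a unital $*$-homomorphism of $\R\G$. This holds because $\rho(ae_{11})=P_1\rho(ae_{11})P_1$ is reduced by $\H=P_1\mathcal{K}$, so restriction commutes with adjoints.
\end{itemize}

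In the final verification for $a=\g e_{ij}$ you only check vectors in the $j$-th summand. You should also note that both sides vanish on $P_l\mathcal{K}$ for $l\neq j$. This is immediate from $e_{ij}e_{ll}=0$ on one side, and from the fact that the operator matrix has a single nonzero entry in column $j$ on the other; linearity then finishes the proof.

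Your remark that boundedness is used only for the adjoints of $\pi(\g)$ undersells it slightly. Boundedness is needed from the outset, to make the $P_i$ orthogonal projections with closed ranges so that $\mathcal{K}=\bigoplus_i P_i\mathcal{K}$ is an orthogonal Hilbert-space decomposition. This does not affect the proof itself.
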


As a corollary, the classical Gelfand-Naimark-Segal construction for positive functionals on matrices over the group algebra can be understood via matrix coefficients for unitary representations of the underlying group.

\begin{prop} [GNS construction] \label{GNS for matrices of group algebras}
      Let $\G$ be a discrete group and $k \geq 1$.
 Let $\psi : \mathbb{M}_k(\R \G) \to \R$ be a positive hermitian linear functional. Then there exist a unitary representation $\pi: \G \to \mathrm{B}(\H)$ of $\G$ and $c \in \H^k$ such that for $a \in \mathbb{M}_k(\R \G)$ we have
\begin{equation*}
    \psi(a) = \langle \pi(a) c, c \rangle_{\H^k}.
\end{equation*}
The vector $c$ is cyclic for the corresponding $\mathbb{M}_k(\R \G)$-representation, that is, the space $\pi (\mathbb{M}_k(\R \G)) c$ is dense in $\H^k$.

(Uniqueness) Let $(\pi ', \H')$ be another unitary representation of $\G$ and $c' \in (\H')^k$ be a vector such that $\psi(a) =  \langle \pi'(a) c', c' \rangle_{(\H')^k}$ for every $a \in \mathbb{M}_k(\R \G)$ and such that the space $\pi' (\mathbb{M}_k(\R \G)) c'$ is dense in $(\H')^k$. Then there exists a unitary isomorphism $U : \H \to \H'$ intertwining $\pi$ and $\pi'$ as unitary $\G$-representations and mapping entries of $c$ to entries of $c'$.

\end{prop}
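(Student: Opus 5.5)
We will run the standard GNS construction for the unital $*$-algebra $A=\mathbb{M}_k(\R\G)$ and then use Proposition \ref{*-reps of matrices decompose as a unirep coordinate-wise} to turn the resulting $*$-representation into a unitary representation of $\G$.

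\textbf{Step 1: the pre-Hilbert space.} On $A$ define the bilinear form $\langle a,b\rangle_\psi=\psi(b^*a)$. It is symmetric because $\psi$ is hermitian: $\psi(b^*a)=\psi((b^*a)^*)=\psi(a^*b)$. It is positive semidefinite because $\psi$ is positive. Cauchy--Schwarz then shows that the null space $N=\{a:\psi(a^*a)=0\}$ is a left ideal. Let $\mathcal{K}$ be the completion of $A/N$, and let $A$ act on $A/N$ by left multiplication, $\rho(x)[a]=[xa]$.

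\textbf{Step 2: boundedness (the main obstacle).} We must show that each $\rho(x)$ extends to a bounded operator on $\mathcal{K}$. Here we use the Archimedean property of $\mathbb{M}_k(\R\G)$ from \cite[Corollary 13]{bader-nowak}. For $x\in A$ the element $x^*x$ is self-adjoint, so there is $R>0$ with $R\cdot 1-x^*x\in\S^2A$, say $R\cdot 1-x^*x=\sum_j y_j^*y_j$. Then
\[
R\,\psi(a^*a)-\psi(a^*x^*xa)=\sum_j\psi\big((y_ja)^*(y_ja)\big)\ge 0 .
\]
Hence $\|\rho(x)[a]\|^2\le R\|[a]\|^2$. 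So $\rho$ preserves $N$, extends to $\mathcal{K}$, and gives a unital bounded $*$-representation; the $*$-property follows from $\langle xa,b\rangle_\psi=\psi(b^*xa)=\langle a,x^*b\rangle_\psi$. The vector $\xi=[1]$ is cyclic and satisfies $\langle\rho(a)\xi,\xi\rangle=\psi(a)$.

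\textbf{Step 3: passing to a unitary representation.} Apply Proposition \ref{*-reps of matrices decompose as a unirep coordinate-wise} to $\rho$. This gives a unitary representation $\pi$ on $\H$ and an isometric isomorphism $\theta:\mathcal{K}\to\H^k$. Set $c=\theta(\xi)$. Then $\psi(a)=\langle\pi(a)c,c\rangle_{\H^k}$, and cyclicity transfers through $\theta$.

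\textbf{Uniqueness.} Given $(\pi',c')$ as in the statement, define $V:\pi(a)c\mapsto\pi'(a)c'$. This map is well defined and isometric because
\[
\|\pi(a)c\|^2=\psi(a^*a)=\|\pi'(a)c'\|^2,
\]
and by density of both cyclic subspaces it extends to a unitary $V:\H^k\to(\H')^k$ intertwining the $\mathbb{M}_k(\R\G)$-actions.

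To get a map $U:\H\to\H'$, use the matrix units $e_{ij}$ together with the diagonal matrices $\gamma\cdot e_{ii}$. Since $V$ intertwines the $\pi(e_{ii})$, it maps the $i$-th coordinate copy of $\H$ onto that of $\H'$, giving unitaries $U_i$. Since $V$ intertwines the $\pi(e_{ij})$, all the $U_i$ coincide with a single unitary $U$. Since $V$ intertwines the diagonal $\gamma$'s, $U$ intertwines $\pi$ and $\pi'$. Finally, $V$ maps $c=\pi(1)c$ to $c'$, so $U$ maps each entry $c_i$ to $c'_i$.
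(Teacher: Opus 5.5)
Your proof is correct, and the existence half matches the paper. The paper cites the classical GNS theorem for the unital $*$-algebra $\mathbb{M}_k(\R\G)$ and then applies Proposition~\ref{*-reps of matrices decompose as a unirep coordinate-wise}, which is exactly your Step~3. Your Step~2 adds something the paper only uses tacitly: Proposition~\ref{*-reps of matrices decompose as a unirep coordinate-wise} is stated for bounded representations, and you verify boundedness of the GNS representation via the Archimedean property.

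For uniqueness you take a genuinely different route.

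\emph{The paper's route.} It works directly on $\H$. From density in $\H^k$ it deduces that $\H_0=\sum_j\pi(\R\G)c_j$ is dense in $\H$. It then compares $\Phi(\xi_1,\dots,\xi_k)=\sum_j\pi(\xi_j)c_j$ with $\Phi'$ on $(\R\G)^k$ by testing $\psi$ on matrices with a single nonzero row. Finally it defines $U$ through $\Phi'=U\circ\Phi$ and checks the intertwining by hand.

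\emph{Your route.} You first invoke the standard uniqueness of cyclic GNS representations to get a unitary $V:\H^k\to(\H')^k$ intertwining the $\mathbb{M}_k(\R\G)$-actions. You then use the matrix units $e_{ii}$, $e_{ij}$ and the elements $\g e_{ii}$ to show that $V=\diag(U,\dots,U)$ with $U$ a $\G$-intertwiner.

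\emph{Comparison.} Your argument is more structural and yields the Remark following the proposition (that $\diag(U,\dots,U)$ intertwines the matrix representations) for free. The paper's argument gives an explicit formula for $U$ on a concrete dense subspace of $\H$ and avoids the matrix-unit bookkeeping.

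One small step you assert without justification: that $V$ maps the $i$-th coordinate copy of $\H$ \emph{onto} that of $\H'$. This follows because $V^*=V^{-1}$ also intertwines the self-adjoint elements $e_{ii}$.
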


\begin{proof}
    The classical GNS construction \cite[Theorem 4.38]{schmudgen-book} applied to the unital $*$-algebra $\mathbb{M}_k(\R \G)$ says that there exist a bounded $*$-representation $\rho: \mathbb{M}_k(\R \G) \to \mathrm{B}(\mathcal{K})$ of $\mathbb{M}_k(\R \G)$ and a cyclic vector $\xi \in \mathcal{K}$ such that $$ \psi(a) = \langle \rho(a) \xi, \xi \rangle. $$
    Applying Proposition \ref{*-reps of matrices decompose as a unirep coordinate-wise} to $\rho$ yields a unitary representation $\pi: \G \to \mathrm{U}(\H)$ of $\G$ and a Hilbert space isometric isomorphism $\theta: \mathcal{K} \to \H^k$ such that for $a = (a_{ij}) \in \mathbb{M}_k(\R \G)$ we have $ \rho(a)  =\theta^{-1}   \circ( \pi(a_{ij})) \circ \theta$. By setting $c: = \theta(\xi)$ we obtain
    $$\psi(a) = \langle \rho(a) \xi, \xi \rangle =  \langle \theta^{-1}   \circ \pi(a) \circ \theta (\xi), \xi \rangle = \langle \pi(a) c, c \rangle. $$
    Since $\theta$ is an isometric isomorphism and $\xi \in \mathcal{K}$ is a cyclic vector, we see that $c$ is a cyclic vector in $\H^k$ for the action of $\pi(\mathbb{M}_k(\R \G))$.

   To prove uniqueness, let $(\pi ', \H')$ be a unitary representation of $\G$ and $c' \in (\H')^{k}$ a vector such that $\psi(a) =  \langle \pi'(a) c', c' \rangle_{(\H')^k}$ and such that $\pi'(\mathbb{M}_k(\R \G)) c'$ is dense in $(\H')^k$. Write $c = (c_1, \ldots, c_k)^T$ and $c' = (c_1', \ldots, c_k')^T$.
    The orbit $\pi(\mathbb{M}_k(\R \G)) c$ can be described using coordinates as $$\pi(\mathbb{M}_k(\R \G)) c = \pi(\mathbb{M}_k(\R \G)) \begin{pmatrix}
         c_1 \\
        \vdots \\
        c_k
    \end{pmatrix} = \begin{pmatrix}
        \sum_{j=1}^k \pi(\R \G) c_j \\
        \vdots \\
        \sum_{j=1}^k \pi(\R \G) c_j
    \end{pmatrix} = \left( \sum_{j=1}^k \pi(\R \G) c_j \right)^k. $$
    Hence, if the space $\sum_{j=1}^k \pi(\R \G) c_j$ was not dense in $\H$, then $\pi(\mathbb{M}_k(\R \G)) c$ would not be dense in $\H^k$.
    Our density assumptions imply that the subspaces
$$ \H_0 := \sum_{j=1}^k \pi(\R \G) c_j, \quad \H'_0 := \sum_{j=1}^k \pi'(\R \G) c_j', $$
are dense in $\H$ and $\H'$ respectively. Consider the maps 
\begin{align*}
    \Phi : (\R \G )^k &\to \H_0 & \Phi' : (\R \G )^k &\to \H_0'   \\
    (\xi_1, \ldots, \xi_k) & \mapsto \sum_{j = 1}^k \pi(\xi_j) c_j  & (\xi_1, \ldots, \xi_k) & \mapsto \sum_{j = 1}^k \pi'(\xi_j) c_j'
\end{align*}
For any $1 \leq i \leq k$, we consider the matrix $$ a_i = \begin{pmatrix}
    0 & \ldots & 0 \\
    \vdots & & \vdots \\
    \xi_1  & \ldots & \xi_k \\
       \vdots & & \vdots \\
    0 & \ldots & 0
\end{pmatrix}$$
where $\xi_j \in \R \G$ for $1 \leq j \leq k$. Since we have $ \langle \pi(a) c, c \rangle_{\H^k} =  \langle \pi'(a) c', c' \rangle_{(\H')^k}$ for every $a \in \mathbb{M}_k(\R \G)$, replacing $a$ by $a_i$ yields
$$ \left\langle \sum_{j = 1}^k \pi(\xi_j) c_j , c_i \right\rangle  =  \left\langle \sum_{j = 1}^k \pi'(\xi_j) c_j' , c_i' \right\rangle. $$ 
From this, using linearity and adjoints, we can show that for $\eta_i \in \R \G$, where $1 \leq i \leq k$, we also have
$$ \left\langle \sum_{j = 1}^k \pi(\xi_j) c_j , \sum_{i = 1}^k \pi(\eta_i) c_i \right\rangle  =  \left\langle \sum_{j = 1}^k \pi'(\xi_j) c_j' , \sum_{i = 1}^k \pi'(\eta_i) c_i' \right\rangle, $$ 
which means that 
$$ \left\langle \Phi (\xi_1, \ldots, \xi_k) ,\Phi (\eta_1, \ldots, \eta_k) \right\rangle  =  \left\langle \Phi' (\xi_1, \ldots, \xi_k) ,\Phi' (\eta_1, \ldots, \eta_k) \right\rangle. $$ 
This implies that $\ker \Phi = \ker \Phi'$, so the map $\Phi' : (\R \G)^k \to \H_0'$ factors through $\Phi:  (\R \G)^k \to \H_0$ via a linear map $U : \H_0 \to \H_0'$ such that $ \Phi' = U \circ \Phi $. Since $\Phi$ is surjective onto $\H_0$, the previous equality implies that $U :\H_0 \to \H_0'$ is unitary, and onto $\H_0'$ as $\Phi'$ is surjective. By density of $\H_0$ and $\H_0'$ in $\H$ and $\H'$ respectively, the map $U$ can be extended to a surjective unitary linear operator $ U : \H \to \H'$.

By choosing vectors $(0, \ldots, 1, \ldots, 0) \in (\R \G)^k$, the relation $\Phi' = U \circ \Phi$ implies that $U c_i = c_i'$ for every $1 \leq i \leq k$. The map $U$ intertwines $\pi$ and $\pi'$, since for every $\xi_j \in \R \G$ we have
$$ U \left( \pi (\g) \sum_{j = 1}^k \pi(\xi_j) c_j \right) = U \left( \sum_{j = 1}^k \pi(\g \xi_j) c_j \right) =  \sum_{j = 1}^k \pi'(\g \xi_j) c_j'= \pi(\g)  \sum_{j = 1}^k \pi'( \xi_j) c_j', $$
so $U$ intertwines $\pi$ and $\pi'$ on $\H_0$ and hence on $\H$, by density.
\end{proof}

\begin{rem}\normalfont
    Note that in the above setting, the $k \times k$
   diagonal matrices $\mathcal{U}^{(k)}=\diag(U,U,\dots, U)$ intertwine
   the representations $\pi$ and $\pi'$ when viewed as a representation of $\mathbb{M}_k(\R\G)$, and also 
   intertwine non-square matrices such as the codifferentials $d_{n}$ by setting 
   $$\mathcal{U}^{(k_{n+1})} \pi(d_n)= \pi'(d_n) \mathcal{U}^{(k_{n})}.$$
\end{rem}

We record some facts on squares of non-squares matrices.

\begin{lem}\label{Squares of non-square matrices}
    Let $\G$ be a discrete group and $k \geq 1$. 
    \begin{itemize}
        \item[(1)] \cite[Lemma 14]{bader-nowak} Let $x \in \mathbb{M}_{k' \times k}(\R \G)$. Then $x^* x \in \S^2 \mathbb{M}_k(\R \G)$.
        \item[(2)] Let $x_i \in \mathbb{M}_{k' \times k}(\R \G)$ for $i \in I$, where $I$ is finite. If $\sum_{i \in I} x_i^*x_i = 0\in \S^2 \mathbb{M}_k(\R \G)$ then $x_i = 0$ for every $i \in I$.
    \end{itemize}
    
\end{lem}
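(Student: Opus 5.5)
The only new claim in the lemma is part (2); part (1) is quoted from \cite[Lemma 14]{bader-nowak}. My plan for (2) is to reduce to a statement about the trace, that is, the coefficient of the identity element. For $a \in \R\G$ write $\tau(a)$ for the coefficient of $e_\G$ in $a$. For a square matrix $m \in \mathbb{M}_k(\R\G)$ put $\mathrm{Tr}(m) = \sum_{j=1}^k \tau(m_{jj})$. This is a linear functional on $\mathbb{M}_k(\R\G)$.

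The key computation is the following. For $x \in \mathbb{M}_{k'\times k}(\R\G)$, the $(j,j)$ entry of $x^*x$ is $\sum_{l=1}^{k'} x_{lj}^* x_{lj}$. For any $b = \sum_\g b_\g \g \in \R\G$ we have
$$\tau(b^*b) = \sum_{\g,\g'} b_\g b_{\g'}\, \tau(\g^{-1}\g') = \sum_\g b_\g^2 .$$
It follows that
$$\mathrm{Tr}(x^*x) = \sum_{l=1}^{k'}\sum_{j=1}^{k}\sum_{\g\in\G} \big((x_{lj})_\g\big)^2 .$$
This is the sum of the squares of all real coefficients of all entries of $x$. In particular it is $\geq 0$, with equality if and only if $x=0$.

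Now suppose $\sum_{i\in I} x_i^*x_i = 0$. Applying the linear functional $\mathrm{Tr}$ gives $\sum_{i\in I}\mathrm{Tr}(x_i^*x_i)=0$. Each summand is a sum of squares of real numbers, hence nonnegative. So each summand vanishes, and therefore every coefficient of every entry of each $x_i$ is zero, that is, $x_i=0$ for every $i$.

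I do not expect a genuine obstacle here. The only point needing care is the identity $\tau(\g^{-1}\g') = \delta_{\g,\g'}$, which is what makes the trace functional faithful on hermitian squares. Finiteness of $I$ and of the supports guarantees that all the sums involved are finite.
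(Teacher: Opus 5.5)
Your proof is correct and is essentially the paper's argument. The paper pairs $\sum_i x_i^*x_i$ with the identity matrix to get $\langle \sum_i x_i^*x_i, I\rangle = \sum_i \|x_i\|_{\ell^2}^2 = 0$, and this pairing is exactly your functional $\mathrm{Tr}$ (the sum of the $e_\G$-coefficients of the diagonal entries). Like the paper, you take part (1) from \cite[Lemma 14]{bader-nowak}.
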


\begin{proof}
We will only prove $(2)$. If $x_i \in \mathbb{M}_{k' \times k}(\R \G)$ satisfy $\sum_{i \in I} x_i^*x_i = 0$, we have:
    \begin{equation*}
       \sum_{i \in I} ||x_i||_{\ell^2(\mathbb{M}_{k' \times k}(\R \G))}^2 = \left\langle \sum_{i \in I} x_i^*x_i, I_n \right\rangle = 0.
    \end{equation*}
    Hence $x_i= 0$ for every $i \in I$.

\end{proof}

\section{Group cohomology and finiteness properties}

We refer to standard references for elementary definitions of cohomological nature such as group cohomology or geometric dimension \cite{brown-cohomology, borel-wallach}.

We say that a group $\G$ is of \textit{type $F_{m}$} for some $m\geq 0$ if $\G$ admits a free action by simplicial automorphisms on some locally finite contractible simplicial complex $X$, such that for every $n \leq m$ the induced action of $\G$ on the set of $n$-simplices $X^{(n)}$ of $X$ has a finite number of $\G$-orbits $k_n$.

Let $\G$ be a group of type $F_{m+1}$ for some $m \geq 0$ and $X$ be a simplicial complex as above.
The space of $\R \G$-valued $\G$-equivariant $n$-cochains is denoted by $$C^n(X, \R \G)^\G := \{  c: X^{(n)} \to \R \G, c(\g \s) = \g c(\s) \in \R \G, \g \in \G, \s \in X^{(n)} \}.$$
Notice that by choosing representatives for the $\G$-orbits in $X^{(n)}$, we can identify $C^n(X, \R \G)^\G$ and $(\R \G)^{k_n}$. By tensoring with the identity, the standard simplicial coboundary operator on $X$ extends naturally to an $\R \G$-equivariant map 
\begin{equation*}
    d_n: C^n(X, \R \G)^\G \to C^{n+1}(X, \R \G)^\G.
\end{equation*}
Hence we may view the operator $d_n$ as a matrix $d_n \in \mathbb{M}_{k_{n+1} \times k_{n}}(\R \G)$ for $n \leq m$. 
The adjoint matrix $d_n^*  \in  \mathbb{M}_{k_{n} \times k_{n+1}}(\R \G)$ of $d_n$ is just the transpose of the matrix $d_n$ after applying the $*$-involution of $\R \G$ on each entry. The adjoint $d_n^*$ can be obtained by the same method from the simplicial boundary operator on $X$.

The above setting is assumed for the following lemma.

\begin{lem}\label{left mult by d* is acyclic/ right mult by d is acyclic} 
    For any $j\geq 1$, the chain complex $d^*_n \square$, given in degrees $n \leq m$ by left multiplication by the matrices $d^*_n$,
    \begin{equation*}
 0 \xleftarrow{} \mathbb{M}_{1\times j}(\R \G) \xleftarrow{d^*_0 \square} \mathbb{M}_{k_1 \times j}(\R \G)  \xleftarrow{d^*_1 \square } \mathbb{M}_{k_2 \times j}(\R \G) \xleftarrow{} \ldots
    \end{equation*}
    is acyclic in degrees $1, \ldots, m$. Similarly, for any $j\geq 1$ the chain complex $\square d_n$ of right multiplication by the matrices $d_n$ in degrees $n \leq m$,
        \begin{equation*}
 0 \xleftarrow{}\mathbb{M}_{j \times 1}(\R \G) \xleftarrow{ \square d_0} \mathbb{M}_{ j \times k_1}(\R \G)  \xleftarrow{\square d_1} \mathbb{M}_{j \times k_2}(\R \G) \xleftarrow{} \ldots
    \end{equation*}
    is acyclic in degrees $1, \ldots, m$.
\end{lem}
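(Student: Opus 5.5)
The plan is to identify both complexes with (direct sums of copies of) the cellular chain complex of $X$ with coefficients in $\R\G$, and then use contractibility of $X$.

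\textbf{The complex $d_n^*\square$.} First fix $j=1$. Via the identification $C^n(X,\R\G)^\G\cong(\R\G)^{k_n}$, an equivariant cochain is determined by its values on the chosen orbit representatives. Under this identification, left multiplication by $d_n^*$ on column vectors $\mathbb{M}_{k_{n+1}\times 1}(\R\G)=(\R\G)^{k_{n+1}}$ is the simplicial boundary map of the free $\R\G$-module complex. Indeed, $C_n(X,\R)$, the real simplicial chains on $X$, is a free $\R\G$-module with basis the $k_n$ orbit representatives. Its boundary $\partial_{n+1}$ is $\R\G$-linear and is given by right multiplication of row vectors by a matrix. After applying the involution, this matrix is $d_n^*$ acting on the left on columns (the paper notes that $d_n^*$ comes from the simplicial boundary). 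The complex
\[
(\R\G)^{k_0}\leftarrow(\R\G)^{k_1}\leftarrow\cdots
\]
is therefore isomorphic, in degrees $\le m+1$, to $C_\bullet(X,\R)$. If necessary, this isomorphism is the involution $x\mapsto x^*$, which is a real-linear bijection and therefore preserves exactness. Since $X$ is contractible, $H_n(X,\R)=0$ for $n\ge1$, so the complex is exact in degrees $1,\dots,m$. Here exactness at degree $n\le m$ uses only the maps $d_{n-1}^*$ and $d_n^*$, which requires finiteness of $k_{n+1}\le k_{m+1}$; this is available because $\G$ is of type $F_{m+1}$. The first-term convention needs some care: $\mathbb{M}_{1\times j}$ should be $\mathbb{M}_{k_0\times j}$, and the augmentation $\mathbb{M}_{k_0\times j}\to 0$ (or to $\R$) is not claimed to be exact, only degrees $\ge 1$ are.

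For general $j$, a $k\times j$ matrix is a $j$-tuple of columns, and left multiplication acts columnwise. The complex is thus the direct sum of $j$ copies of the $j=1$ complex, and exactness is preserved.

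\textbf{The complex $\square d_n$.} Apply the involution $*$, which is a real-linear bijection $\mathbb{M}_{j\times k}(\R\G)\to\mathbb{M}_{k\times j}(\R\G)$ satisfying $(xd_n)^*=d_n^*x^*$. This conjugates the complex $\square d_n$ onto the complex $d_n^*\square$, so exactness transfers.

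\textbf{Main obstacle.} The only real point is verifying that the matrix $d_n^*$, with the paper's conventions (left and right actions, and the involution on $\R\G$), really represents the simplicial boundary of the free $\R\G$-chain complex of $X$. I would check this by writing $\partial(\g\s)=\g\,\partial\s$ and expanding $\partial\s_i=\sum_l (a_{il})\s'_l$ with $a_{il}\in\R\G$ in terms of representatives. Whether the resulting matrix is $d_n^*$ or its entrywise involution does not matter, since both are related by a bijection of complexes.
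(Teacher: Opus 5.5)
Your proposal is correct and follows essentially the paper's proof: identify the $j=1$ complex with the simplicial chain complex of the contractible complex $X$, treat general $j$ column by column, and transfer to $\square d_n$ via the involution $x\mapsto x^*$. One small caution: your closing claim that it would not matter if the matrix were the entrywise involution of $d_n^*$ is false over a noncommutative ring, because plain transposition is not an anti-homomorphism and so, e.g., $d_{n-1}^{T}d_n^{T}$ need not vanish. This does not affect the argument, since the check you describe shows that, with the paper's conventions, the boundary is exactly right multiplication of row vectors by $d_n$, so only the full involution $x\mapsto x^*$ is ever needed.
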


\begin{proof}
We first prove the result for $j = 1$. Notice that $ \mathbb{M}_{k_n \times 1} (\R \G) \simeq C_n(X, \R)$ for $n \leq m+1$, the space of simplicial cochains, and that the operator $d_n^*$ corresponds to the usual simplicial boundary operator $\partial_n$. This identification is given explicitly by numbering $n$-simplices in a fundamental domain of $X$ for the $\G$-action by $\t_i$ for $1\leq i \leq k_{n+1}$ and associating the chain $c = \sum_{\g \in \G, 1\leq i \leq k_{n}} v_i(\g) \g. \t_i$ to the vector $v \in \mathbb{M}_{k_{n} \times 1} (\R \G)$ . This means that the chain complex given by
    \begin{equation*}
          0 \xleftarrow{}\R \G \xleftarrow{d^*_0} \mathbb{M}_{k_1 \times 1}(\R \G)  \xleftarrow{d^*_1} \mathbb{M}_{k_2 \times 1}(\R \G) \xleftarrow{} \ldots   
    \end{equation*}
has the same homology as the complex given by $C_*(X, \R)$, and hence is acyclic in degrees $1, \ldots, m$. This proves the first statement for $j = 1$.

Now let $j \geq 2, 1 \leq n \leq m+1$ and $a \in  \mathbb{M}_{k_n \times j}(\R \G)$ such that $d_{n-1}^* a = 0$. We write $a = (a_i)_{1\leq i \leq j}$ as a horizontal vector, where each $a_i \in \mathbb{M}_{k_n \times 1} (\R \G)$ corresponds to the $i$-th column of $a$. Note that when $d_{n-1}^*$ acts on the left of $a$, it acts separately on each column $a_i$. Hence the condition $d_{n-1}^*a^*=0$ is equivalent to $d_{n-1}^* a_i = 0$ for every $1 \leq i \leq j$. There exist $b_i \in \mathbb{M}_{k_{n+1} \times 1} (\R \G)$ such that $a_i = d_n^* b_i$ for each $1 \leq i \leq j$. Hence the matrix $b = (b_i)_{1 \leq i \leq j} \in  \mathbb{M}_{k_{n+1} \times j}(\R \G)$ satisfies $a = d_n^* b$ and the assertion is proved.

The statement about the complex $\square d_n$ given by right multiplication by $d_n$ follows, as it suffices to apply the involution $*$ and use the acyclicity for $d_n^* \square$.
\end{proof}

Let $(\pi, \H)$ be a unitary $\G$-representation on a Hilbert space $\H$. We define $C^n(X, \H)^\G$ to be the set of $\G$-equivariant maps $X^{(n)} \to \H$. As before, choosing representatives for the $\G$-orbits in $X^{(n)}$ allows us to identify $C^n(X, \H)^\G$ and $\H^{k_n}$. 
Under this identification, we can see the coboundary operator $C^n(X, \H)^\G \to C^{n+1}(X, \H)^\G$ as the matrix $\pi(d_n) \in \mathbb{M}_{k_{n+1} \times k_{n}}(B (\H))$, where one applies the representation $\pi$ entry-wise to the the matrix $d_n \in \mathbb{M}_{k_{n+1} \times k_{n}}(\R \G)$ defined earlier.
The matrices $\pi(d_n)$ compute group cohomology, that is, for every $n \geq 0$ we have:
    \begin{align*}
        H^n(\G, \pi) & = \ker \pi(d_n) / \Im \pi(d_{n-1}), \\
        \overline{H}^n(\G, \pi) & = \ker \pi(d_n) / \overline{\Im \pi(d_{n-1})}
    \end{align*}
(see for instance \cite[Proposition 1]{bader-nowak}).
An important tool for computing unitary cohomology is simplicial Hodge theory, that is, the fact that we can compute it via harmonic cochains. Let
\begin{align*}
    &\D_n^+ :=  d_n^*d_n \in \mathbb{M}_{k_n}(\R \G) &\text{ for } n \geq 0,  \\
    &\D_n^- := d_{n-1} d_{n-1}^* \in \mathbb{M}_{k_n}(\R \G) &\text{ for } n \geq 1, & \;\D_0^- = 0 \in \R \G, \\
     &\D_n = \D_n^+ +\D_n^- \in \mathbb{M}_{k_n}(\R \G) &\text{ for } n \geq 0 . 
\end{align*}

Let $(\pi, \H)$ be a unitary representation of $\G$ and $n \geq 0$. We have continuous isomorphisms:
\begin{equation}\label{reduced cohom = harmonic cocycles}
    \overline{H}^n(\G, \pi) \cong  \ker \pi(\D_n),
\end{equation}
see e.g. \cite[Proposition 16 (1)]{bader-nowak}.

\section{Split conjugation spaces}

Let $\G$ be a group of type $F_m+1$ for some $m \in \na$. Consider the cochain complex from the previous section.

\begin{center}
\begin{tikzcd}
    \cdots \arrow{r} & \R\G^{k_{n-1}} \arrow{r}{d_{n-1}} &  \R\G^{k_{n}} \arrow{r}{d_{n}} & \R\G^{k_{n+1}}  \arrow{r}{d_{n+1}} & \cdots .
\end{tikzcd}
\end{center}
We will consider the induced complex of square matrices with coefficients in $\R \G$
\begin{center}
\begin{tikzcd}
    \cdots
    &\arrow{l}\mathbb{M}_{k_{n-1}} (\R\G)  
    & \arrow{l}{\ \ \ D_{n-1}} \mathbb{M}_{k_{n}} (\R\G) 
    & \arrow{l}{\ \ \ D_{n}}\mathbb{M}_{k_{n+1}} (\R\G)
    &\arrow{l} \cdots, 
\end{tikzcd}
\end{center}
where the maps $D_n$ are defined as 
\begin{equation*}
D_n(a)= d_n^* a d_n,
\end{equation*}
for  $a \in \mathbb{M}_{k_{n+1}}(\R\G)$.

We first prove a property of positive elements in the images of the maps $D_i$, namely that the matricial chain complex is acyclic after restriction to positive elements.

\begin{prop}\label{SOS in Im D_k are images of SOS}
Let $\G$ be of type $F_{n+1}$ for some $n\geq 1$. We have
$$D_n\left( \Sigma^2 \mathbb{M}_{k_{n+1}}(\R\G)\right)= \operatorname{im} D_n \cap \Sigma^2\mathbb{M}_{k_n}(\R\G) = \operatorname{ker} D_{n-1} \cap \Sigma^2\mathbb{M}_{k_n}(\R\G).$$
\end{prop}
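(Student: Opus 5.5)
The plan is to prove the chain of inclusions
$$D_n\left( \Sigma^2 \mathbb{M}_{k_{n+1}}(\R\G)\right)\subseteq \operatorname{im} D_n \cap \Sigma^2\mathbb{M}_{k_n}(\R\G) \subseteq \operatorname{ker} D_{n-1} \cap \Sigma^2\mathbb{M}_{k_n}(\R\G)\subseteq D_n\left( \Sigma^2 \mathbb{M}_{k_{n+1}}(\R\G)\right).$$
Together these give both equalities.

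For the first inclusion, take $a=\sum_i x_i^*x_i$ with $x_i\in \mathbb{M}_{k_{n+1}}(\R\G)$. Then
$$D_n(a)=d_n^*\Big(\sum_i x_i^*x_i\Big)d_n=\sum_i (x_id_n)^*(x_id_n).$$
Each $x_id_n$ lies in $\mathbb{M}_{k_{n+1}\times k_n}(\R\G)$. By Lemma \ref{Squares of non-square matrices}(1), each $(x_id_n)^*(x_id_n)$ lies in $\S^2\mathbb{M}_{k_n}(\R\G)$, and hence so does the sum. The second inclusion is simply that $D_{n-1}\circ D_n=0$. Indeed, $D_{n-1}(D_n(b))=d_{n-1}^*d_n^* b\, d_nd_{n-1}=0$, because $d_nd_{n-1}=0$ (the matrices come from a cochain complex).

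The substantive step is the third inclusion. Let $a=\sum_i x_i^*x_i$ with $x_i\in\mathbb{M}_{k_n}(\R\G)$, and suppose $D_{n-1}(a)=0$. Then
$$0=d_{n-1}^*a\,d_{n-1}=\sum_i (x_id_{n-1})^*(x_id_{n-1}),$$
where $x_id_{n-1}\in\mathbb{M}_{k_n\times k_{n-1}}(\R\G)$. Lemma \ref{Squares of non-square matrices}(2) then forces $x_id_{n-1}=0$ for every $i$. Each row of $x_i$, viewed as an element of $\mathbb{M}_{k_n\times k_n}$ in the right-multiplication complex $\square d$, is therefore a cycle in degree $n$.

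Next I invoke Lemma \ref{left mult by d* is acyclic/ right mult by d is acyclic} with $j=k_n$. Since $\G$ is of type $F_{n+1}$, the complex $\square d$ is acyclic in degrees $1,\dots,n$, and $n\geq 1$ lies in this range. So there are $y_i\in\mathbb{M}_{k_n\times k_{n+1}}(\R\G)$ with $x_i=y_id_n$. Consequently
$$a=\sum_i d_n^*y_i^*y_i d_n=D_n\Big(\sum_i y_i^*y_i\Big),$$
and $\sum_i y_i^*y_i\in\S^2\mathbb{M}_{k_{n+1}}(\R\G)$ by Lemma \ref{Squares of non-square matrices}(1).

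The only delicate point is the index bookkeeping. We must check that the acyclicity of $\square d$ is available exactly in degree $n$ under the hypothesis $F_{n+1}$, i.e. with $m=n$ in Lemma \ref{left mult by d* is acyclic/ right mult by d is acyclic}. This is precisely why the assumption $n\geq1$ is needed. Everything else is a formal consequence of the faithfulness statement in Lemma \ref{Squares of non-square matrices}(2).
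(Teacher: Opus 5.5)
Your proposal is correct and follows the paper's proof essentially step for step. Both arguments get the first inclusion from Lemma~\ref{Squares of non-square matrices}(1), get the second from $D_{n-1}\circ D_n=0$, and get the third by combining faithfulness (Lemma~\ref{Squares of non-square matrices}(2)) to obtain $x_id_{n-1}=0$ with acyclicity of the right-multiplication complex in degree $n$ (Lemma~\ref{left mult by d* is acyclic/ right mult by d is acyclic} with $m=n$) to write $x_i=y_id_n$. One small wording fix: it is the whole matrix $x_i$ that is a degree-$n$ cycle in the complex with $j=k_n$, not ``each row of $x_i$ viewed as an element of $\mathbb{M}_{k_n\times k_n}$''.
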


\begin{proof} The inclusion $D_n\left( \Sigma^2 \mathbb{M}_{k_{n+1}}(\R\G)\right) \subseteq \operatorname{im} D_n \cap \Sigma^2\mathbb{M}_{k_n}(\R\G)$ follows from Lemma \ref{Squares of non-square matrices} $(1)$. The inclusion $\operatorname{im} D_n \cap \Sigma^2\mathbb{M}_{k_n}(\R\G) \subseteq \operatorname{ker} D_{n-1} \cap \Sigma^2\mathbb{M}_{k_n}(\R\G)$ follows from $D_n D_{n-1} = 0$.
We will verify that for a finite collection of $a_i \in \mathbb{M}_{k_n}(\R \G)$, the condition $D_{n-1}\left(\sum_{i}a_i^*a_i\right) = 0$ implies 
$$\sum_i a_i^*a_i = D_{n}(b),$$ 
for some $b \in \S^2 \mathbb{M}_{k_{n+1}}(\R \G)$. 
By Lemma \ref{Squares of non-square matrices} $(2)$,
the equality 
$$0 =  D_{n-1}\left(\sum_i a_i^*a_i\right) = \sum_i d_{n-1}^* a_i^* a_i d_{n-1}= \sum_i (a_i d_{n-1})^* a_i d_{n-1}$$ 
is equivalent to $ a_i d_{n-1} = 0$ for all $i$. By Lemma \ref{left mult by d* is acyclic/ right mult by d is acyclic}, there exist $b_i \in \mathbb{M}_{k_{n} \times k_{n+1}}(\R \G)$ such that $a_i = b_i d_n$ for all $i$. Hence 
 $$ \sum_i a_i^*a_i = \sum_i d_n^* b_i^*b_i d_n = d_n^* \left( \sum_i  b_i^*b_i \right)  d_n =  D_n \left( \sum_i  b_i^*b_i \right),$$
 as required.
\end{proof}

In our further investigation it will be important to describe the kernels of the maps $D_n$ in
a certain way. The spaces we introduce here can be viewed as higher-dimensional analogues of the augmentation ideal, as 
$\ker D_{-1}$ is in fact precisely the augmentation ideal $I(\G)$. 

\begin{defn}\label{Definition: split conjugation}
The \emph{$i$-th split conjugation space} $\mathcal{SC}_i$ is defined by
$$\mathcal{SC}_i: =\left\lbrace a \in \mathbb{M}_{k_i}(\R\G) : a=d_i^*x+yd_i, x \in \mathbb{M}_{k_{i+1} \times k_{i}}(\R \G), y \in \mathbb{M}_{k_{i} \times k_{i+1}}(\R \G)     \right\rbrace. $$

\end{defn}

Notice that 
$$ \Im D_i \subseteq \mathcal{SC}_i \subseteq \ker D_{i-1}. $$
Indeed, for $a = d_i^* x d_i$ where $x \in \mathbb{M}_{k_{i+1}}(\R \G))$, we have $a = \frac{1}{2}d_i^* (x d_i) + \frac{1}{2} (d_i^* x) d_i$. For the inclusion $ \mathcal{SC}_i \subseteq \ker D_{i-1}$, let $a = d_{i}^* x + y d_{i}$ for some $ x \in \mathbb{M}_{k_{i+1} \times k_{i}}(\R \G)$ and $ y \in \mathbb{M}_{k_{i} \times k_{i+1}}(\R \G) $. We have
    \begin{equation*}
        d_{i-1}^* a d_{i-1} = d_{i-1}^* (d_{i}^* x + y d_{i}) d_{i-1} = (d_{i-1}^* d_{i}^*) x  d_{i-1} + d_{i-1}^*y (d_{i} d_{i-1}) = 0.
    \end{equation*}

The following Theorem gives a criterion to ensure that the last inclusion is an equality.

\begin{thm}\label{theorem: ker D_i=SC_i}

Let $\Gamma$ be a group of type $F$ and geometric dimension $\leq n$. Then $$\ker D_{i-1} = \mathcal{SC}_{i},$$ for $i\ge (n+1)/2$.
\end{thm}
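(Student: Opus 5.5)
The plan is a staircase (telescoping) argument in the double complex of rectangular matrices $\mathbb{M}_{p\times q}(\R\G)$. In this complex, left multiplication by $d^*$ lowers the row index and right multiplication by $d$ lowers the column index. Since $\G$ is of type $F$ with geometric dimension $\leq n$, we take $X$ of dimension $\leq n$, so that $k_j=0$ for $j>n$. Then Lemma \ref{left mult by d* is acyclic/ right mult by d is acyclic} gives exactness of both $d^*_\bullet\square$ and $\square d_\bullet$ in every degree $\geq 1$; there is no top-degree restriction because the complexes vanish above $n$. The inclusion $\mathcal{SC}_i\subseteq\ker D_{i-1}$ was shown above, so only the reverse inclusion is needed. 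Fix $a\in\mathbb{M}_{k_i}(\R\G)$ with $d_{i-1}^*ad_{i-1}=0$.

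\textbf{Descending step.} Put $x_0:=a$, which sits in bidegree $(i,i)$. Since $d_{i-1}^*(ad_{i-1})=0$ and $i\geq 1$, exactness of $d^*\square$ in degree $i$ gives
\[
ad_{i-1}=d_i^*x_1, \qquad x_1\in\mathbb{M}_{k_{i+1}\times k_{i-1}}(\R\G).
\]
Inductively, suppose $x_{j-1}d_{i-j}=d_{i+j-1}^*x_j$ with $x_j$ in bidegree $(i+j,i-j)$. Then
\[
d_{i+j-1}^*\,x_jd_{i-j-1}=x_{j-1}d_{i-j}d_{i-j-1}=0,
\]
so exactness gives $x_jd_{i-j-1}=d_{i+j}^*x_{j+1}$ with $x_{j+1}$ in bidegree $(i+j+1,i-j-1)$.

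We continue until $J:=n-i+1$. At that point $x_J$ has row size $k_{n+1}=0$, so $x_J=0$. The hypothesis $i\geq(n+1)/2$ is used exactly here: it guarantees $i-J=2i-n-1\geq 0$. Hence every column index stays nonnegative, and every right multiplication used along the way is defined. This is the step I expect to be the main obstacle, namely keeping the indices legal. The bookkeeping must ensure that each use of right exactness $\ker(\square d_{i-j})=\operatorname{im}(\square d_{i-j+1})$ happens in column degree $i-j+1\geq 1$. The critical case is $i-J=0$, where one uses exactness in degree $1$, which the lemma provides, and never the non-exact degree $0$ given by the augmentation.

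\textbf{Ascending step.} We show by downward induction on $j$, from $J-1$ to $1$, that
\[
x_j=d_{i+j}^*s_j+t_jd_{i-j}
\]
for suitable matrices $s_j,t_j$. For the base case, $x_{J-1}d_{i-J}=d^*x_J=0$. Right exactness in column degree $i-J+1\geq1$ then gives $x_{J-1}=t_{J-1}d_{i-J+1}$, with $s_{J-1}=0$.

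For the inductive step, assume the formula for $x_{j+1}$. Then
\[
x_jd_{i-j-1}=d^*_{i+j}x_{j+1}=d^*_{i+j}t_{j+1}d_{i-j-1},
\]
using $d^*d^*=0$. So $(x_j-d_{i+j}^*t_{j+1})d_{i-j-1}=0$, and right exactness in degree $i-j\geq1$ yields $x_j=d_{i+j}^*t_{j+1}+t_jd_{i-j}$.

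\textbf{Conclusion.} Applying this at $j=1$,
\[
ad_{i-1}=d_i^*x_1=d_i^*t_1d_{i-1}.
\]
Thus $(a-d_i^*t_1)d_{i-1}=0$, and right exactness in degree $i$ gives $a-d_i^*t_1=yd_i$. Therefore $a=d_i^*t_1+yd_i\in\mathcal{SC}_i$. The degenerate case $J\leq 1$ (i.e.\ $i\geq n$) is covered by the same argument directly: there $x_1=0$, so $ad_{i-1}=0$, and one finishes with a single application of right exactness.
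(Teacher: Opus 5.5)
Your proposal is correct and is essentially the paper's own staircase argument. You descend via exactness of $d^*\square$ until the top dimension kills the term; the paper phrases this as $d_n^*=0$ at step $j=n-i$, you as $x_{n-i+1}=0$ because $k_{n+1}=0$, and both use $i\ge (n+1)/2$ to keep the column index $2i-n-1$ nonnegative before climbing back with exactness of $\square d$ and $d^*d^*=0$. Your explicit final step $a=d_i^*t_1+yd_i$ and your separate handling of the degenerate case $i\ge n$ are both fine, and the former supplies the ${}^*$ missing from the paper's last line.
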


\begin{proof}
Suppose $i \geq (n +  1)/2$ and let $a \in \ker D_{i-1}$; that is, $d_{i-1}^* a d_{i-1} = 0$. By Lemma \ref{left mult by d* is acyclic/ right mult by d is acyclic}, there exists $a_1 \in \mathbb{M}_{k_{i+1}\times k_{i-1}}(\R \G)$ such that 
    $$ad_{i-1} = d^*_{i}a_1.$$ 
    Hence $$d^*_{i}a_1 d_{i-2} = a d_{i-1} d_{i-2} = 0.$$ We can iterate this argument and construct a sequence of matrices $(a_j)$, where $a_0  = a$ and $a_j \in \mathbb{M}_{k_{i+j} \times k_{i- j}} (\R\G)$ is such that 
    $$a_j d_{i-j-1} = d_{i+j}^* a_{j+1},$$ 
    provided $i -j -1 \geq 0$. 

\begin{center}
\adjustbox{scale=0.9}{
\begin{tikzcd}
    \dots
    &\arrow{l}\R\G^{k_{i-2}}   \arrow[bend left=35]{rrrr}{a_2}
    &\arrow{l} \R\G^{k_{i-1}} \arrow[bend left=30]{rr}{a_1} 
    & \arrow{l}{\ \ \ d_{i-1}^*} \R\G^{k_{i}} 
    & \arrow{l}{\ \ \ d_i^*}  \R\G^{k_{i+1}}
    & \arrow{l}{\ \ \ d_{i+1}^*} \R\G^{k_{i+2}} 
    &\arrow{l} \cdots, 
\end{tikzcd}
}
\end{center}
\vspace{12pt}

    Since $i \geq (n + 1)/2$, we can apply this procedure inductively up to $j = n- i$, where we have 
    $$a_{n-i} d_{2i - n - 1} = d^*_{n} a_{n-i+ 1}.$$
    But the geometric dimension of $\G$ is at most $n$, so we have $d^*_{n} = 0$ and hence 
    $$a_{n-i} d_{2i - n-1} = 0.$$
    Now, by Lemma \ref{left mult by d* is acyclic/ right mult by d is acyclic}, there exists $b_{n-i} \in \mathbb{M}_{k_n \times k_{2i - n + 1}}(\R\G)$ such that $a_{n-i} = b_{n-i} d_{2i - n}$.
    Since we also have 
    $$a_{n - i - 1}d_{2i - n} = d_{n-1}^* a_{n-i} = d_{n-1}^*  b_{n-i} d_{2i - n},$$
    we obtain
    $$ (a_{n-i -1} - d_{n-1}^*b_{n-i}) d_{2i- n} = 0. $$
    By Lemma \ref{left mult by d* is acyclic/ right mult by d is acyclic} there exists $b_{n-i-1}\in \mathbb{M}_{k_{n-1} \times k_{2i - n+ 2}}(\R\G)$ such that
    $$ a_{n-i -1} = d_{n-1}^*b_{n-i} + b_{n-i -1} d_{2i- n +1}. $$
    Iterating this argument we obtain a sequence $b_j\in \mathbb{M}_{k_{i+j} \times k_{i-j+1}}(\R\G)$ (descending in $j$) for $ 0 \leq j \leq n - i - 1$ such that
    $$ a_j = d_{i+j}^*b_{j + 1} + b_j d_{i- j}.$$
    Reaching $j = 0$ we obtain  
    $$ a = a_0 = d_{i} b_1 + b_0 d_{i}.$$
    Hence $a \in \mathcal{SC}_{i}$.
\end{proof}

\begin{rem}\normalfont
    This proof and its consequences remain valid after replacing property $F$ and geometric dimension by their corresponding algebraic counterparts, namely, property $FP$ and cohomological dimension.
\end{rem}

\section{The higher dimensional GNS construction}

The Gelfand-Naimark-Segal construction is a fundamental tool in representation theory
and cohomology with unitary coefficients. One classical application of this construction, is that it gives a correspondence between 1-cocycles of unitary representations and conditionally negative functions (that we view as positive functionals on the augmentation ideal). We first recall the statement.

\begin{prop} [Classical cocycle GNS] \label{Classical conditionally negative GNS} 
Let $\G$ be a discrete group.
    \begin{enumerate}
        \item[(1)] For every hermitian positive functional $\phi : I(\G) \to \R$, there exists a unitary representation $(\pi, \H)$ of $\G$ and a 1-cocycle $z \in Z^1 (\G, \pi)$ such that $$\phi (1 - \g) = ||z (\g)||^2.$$
        \item[(2)] For every unitary representation $(\pi, \H)$ of $\G$ and every 1-cocycle 
        $z \in Z^1 (\G, \pi)$ there exists a hermitian positive functional 
        $\phi : I(\G) \to \R$ such that $$\phi (1 - \g) = ||z(\g)||^2.$$
    \end{enumerate}
\end{prop}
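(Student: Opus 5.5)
For part (1), the plan is to extend $\phi$ from $I(\G)$ to all of $\R\G$ and apply a GNS-type construction. There are two routes. One is to construct the Hilbert space directly from $I(\G)$. The other, which is cleaner, is the standard conditionally-negative-kernel argument. I will take the direct route. On $I(\G)$, which is a $*$-subalgebra and a left ideal, define the sesquilinear form
$$\langle a, b\rangle := \phi(b^* a), \qquad a,b \in I(\G).$$
This is well defined because $b^*a \in I(\G)$. It is symmetric because $\phi$ is hermitian. It is positive semidefinite because $a^*a \in \S^2 I(\G)$; here positivity of $\phi$ on $I(\G)$ is interpreted as $\phi(\S^2 I(\G))\ge 0$. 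Let $N$ be the null space, and let $\H$ be the completion of $I(\G)/N$. The group acts by left multiplication, $\pi(\g)[a] = [\g a]$. This action is isometric, since $\phi((\g b)^*\g a) = \phi(b^*a)$. It preserves $N$, because the Cauchy–Schwarz inequality identifies $N$ with the set of $a$ such that $\langle a,a\rangle = 0$. Hence $\pi$ extends to a unitary representation.

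Next define $z(\g) := [1-\g] \in \H$. I will verify the cocycle identity
$$z(\g\eta) = [1-\g\eta] = [1-\g] + [\g(1-\eta)] = z(\g) + \pi(\g) z(\eta),$$
which is just an identity in $\R\G$. Then
$$\|z(\g)\|^2 = \phi\big((1-\g)^*(1-\g)\big) = \phi\big(2 - \g - \g^{-1}\big) = \phi(1-\g) + \phi(1-\g^{-1}) = 2\phi(1-\g),$$
where the last equality uses that $\phi$ is hermitian. So the naive construction produces a factor $2$. I will fix this by rescaling, setting $z(\g) := [1-\g]/\sqrt2$, which gives exactly $\phi(1-\g) = \|z(\g)\|^2$.

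For part (2), given $z \in Z^1(\G,\pi)$, define $\phi$ on the basis $\{1-\g : \g \neq e\}$ of $I(\G)$ by $\phi(1-\g) := \|z(\g)\|^2$, and extend linearly. The functional is hermitian because $\|z(\g^{-1})\| = \|-\pi(\g^{-1})z(\g)\| = \|z(\g)\|$.

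The main point in (2) is positivity. For $a = \sum a_\g \g \in I(\G)$ I want to show $\phi(a^*a) = \tfrac12\big\|\sum_\g a_\g z(\g)\big\|^2 \ge 0$. To see this, first use $\sum_\g a_\g = 0$ to write
$$a^*a = \sum_{\g,\eta} a_\g a_\eta\, \g^{-1}\eta = -\sum_{\g,\eta} a_\g a_\eta\,(1-\g^{-1}\eta),$$
so that
$$\phi(a^*a) = -\sum_{\g,\eta} a_\g a_\eta\, \|z(\g^{-1}\eta)\|^2.$$
By the cocycle identity, $z(\g^{-1}\eta) = \pi(\g^{-1})\big(z(\eta)-z(\g)\big)$, hence $\|z(\g^{-1}\eta)\|^2 = \|z(\eta)-z(\g)\|^2$. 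Expanding this norm, the squared-norm terms vanish because $\sum a_\g = 0$. The cross terms then give
$$2\Big\|\sum_\g a_\g z(\g)\Big\|^2 \ge 0.$$
The constant $2$ here must be reconciled with the normalization in (1); I expect it to come out consistently.

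Finally, positivity must hold on sums of squares $\sum a_i^* a_i$ with $a_i \in I(\G)$. This follows from linearity, provided that positivity on $I(\G)$ means $\phi(\S^2 I(\G)) \ge 0$. If instead positivity is required on $\S^2\R\G \cap I(\G)$, I would reduce to the previous case by noting that $I(\G)\cap \S^2 \R\G = \S^2 I(\G)$. To prove this, write $a^*a$ with $a \in \R\G$ and let $\epsilon(a)$ denote the augmentation of $a$. Summing augmentations, $\sum_i \epsilon(a_i)^2 = 0$ forces each $\epsilon(a_i) = 0$, so every $a_i$ already lies in $I(\G)$.
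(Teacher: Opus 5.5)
Your proof is correct, but it is the classical direct argument rather than the route the paper takes. The paper does not prove this proposition itself; it cites \cite{bekka-harpe-valette} and \cite{ozawa-substitute}. It recovers only part (1), in the Remark after Theorem~\ref{higher GNS - image version}, and it does so by passing through the matrix algebra. For finitely generated $\Gamma$ it writes $1-\gamma=a\,d_0$ with an explicit Fox-derivative-type row vector $a$, so that $2-\gamma-\gamma^{-1}=D_0(a^*a)$. It then applies the matricial GNS to $\psi=\phi\circ D_0$ on $\mathbb{M}_{k_1}(\mathbb{R}\Gamma)$. The cocycle comes out as a vector $z\in\mathcal{H}^{k_1}$ of values on generators, with $\|z(\gamma)\|^2=\|\pi(a)z\|^2=2\phi(1-\gamma)$. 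This is the same factor $2$ you found and correctly rescaled away. The paper's route needs finiteness assumptions (type $F_2$ as stated), but it is the template that generalizes to degree $n$. Yours needs no finiteness assumptions, and it realizes $\mathcal{H}$ concretely as the completion of $I(\Gamma)$ under the left regular action, with $z(\gamma)=[1-\gamma]/\sqrt2$.

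For (2) your gain is larger. The paper's matricial version defines $\phi$ on $\operatorname{im}D_0$ through $D_0$. Well-definedness there requires $\mathcal{SC}_1=\ker D_0$, which the paper cannot establish for general finitely presented groups. You avoid this entirely by defining $\phi$ on the basis $\{1-\gamma\}_{\gamma\neq e}$ of $I(\Gamma)$ and proving positivity by the conditionally-negative-type computation.

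Your observation that $I(\Gamma)\cap\Sigma^2\mathbb{R}\Gamma=\Sigma^2 I(\Gamma)$ is the right way to reconcile the two readings of positivity. It is precisely the argument of Proposition~\ref{SOS in Im D_k are images of SOS} one degree lower, with the augmentation playing the role of $d_{-1}$.

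One harmless slip: you announce $\phi(a^*a)=\tfrac12\|\sum_\gamma a_\gamma z(\gamma)\|^2$, but your computation correctly gives $2\|\sum_\gamma a_\gamma z(\gamma)\|^2$. This matches (1): with $z(\gamma)=[1-\gamma]/\sqrt2$ one has $\sum_\gamma a_\gamma z(\gamma)=-[a]/\sqrt2$. In any case only the sign matters.
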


We remark that these two constructions are inverses one of another. We refer to \cite[2.10.2]{bekka-harpe-valette} and \cite[Section 6]{ozawa-substitute} for details. 
The main result of this section is a generalization of the above fundamental relation between  positive functionals and
cocycles with unitary coefficients, to higher degrees. 
Our next proposition is an intermediate form of the higher GNS construction.

\begin{prop} \label{higher GNS}
Let $\G$ be a discrete group of type $F_{n+1}$ for some $n \in \Z_{\geq 1}$.
    The following conditions hold:
\begin{enumerate}
 \item[(1)] For every positive hermitian functional 
    $\psi:\mathbb{M}_{k_n}(\R \G)\to \R$
    which vanishes on $\Im D_n$ there exists 
    a unitary representation $\pi$ of $\G$ and a cyclic cocycle $z\in Z^n(\G,\pi)$ such that 
    $$\psi(a) = \langle \pi(a) z, z \rangle,$$
    for every $a\in \mathbb{M}_{k_n}(\R\G)$.
    
    \item[(2)] For every unitary representation $\pi$ of $\G$ 
    and every cocycle $z \in Z^n(\G, \pi)$, the positive hermitian functional $\psi: \mathbb{M}_{k_n}(\R\G) \to \R$ defined by
    $$\psi(a) = \langle \pi(a) z, z \rangle,$$ 
    for every $a \in \mathbb{M}_{k_n}(\R \G)$, vanishes on $\mathcal{SC}_n$.
   
\end{enumerate}
\end{prop}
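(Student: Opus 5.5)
For part (1), I would begin with the GNS construction for matrices, Proposition \ref{GNS for matrices of group algebras}, applied to $\psi$ on $\mathbb{M}_{k_n}(\R\G)$. It yields a unitary representation $(\pi,\H)$ of $\G$ and a cyclic vector $z \in \H^{k_n}$ with $\psi(a) = \langle \pi(a) z, z\rangle$. Since $\H^{k_n} \cong C^n(X,\H)^\G$, the vector $z$ is an $n$-cochain, so it remains to show $\pi(d_n) z = 0$.

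For this I would test $\psi$ on the element $D_n(1) = d_n^* d_n \in \Im D_n$, where $1$ is the identity of $\mathbb{M}_{k_{n+1}}(\R\G)$. By hypothesis $\psi$ vanishes on $\Im D_n$, and $\pi$ is multiplicative and $*$-preserving entrywise, also for non-square matrices. Hence
$$0 = \psi(d_n^* d_n) = \langle \pi(d_n)^*\pi(d_n) z, z\rangle = \|\pi(d_n) z\|^2_{\H^{k_{n+1}}}.$$
So $\pi(d_n) z = 0$, meaning $z \in Z^n(\G,\pi)$. Cyclicity of $z$ is exactly the cyclicity statement in Proposition \ref{GNS for matrices of group algebras}.

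For part (2), let $z \in Z^n(\G,\pi)$ and set $\psi(a) = \langle \pi(a) z, z\rangle$.

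\begin{itemize}
\item $\psi$ is hermitian because $\pi(a^*) = \pi(a)^*$, so $\psi(a^*) = \langle z, \pi(a) z\rangle = \psi(a)$, using that we work over $\R$.
\item $\psi$ is positive since $\psi(\sum b_i^* b_i) = \sum \|\pi(b_i) z\|^2 \geq 0$.
\item For vanishing on $\mathcal{SC}_n$, take $a = d_n^* x + y d_n$. Then
$$\psi(a) = \langle \pi(x) z, \pi(d_n) z\rangle + \langle \pi(y)\pi(d_n) z, z\rangle = 0,$$
since $\pi(d_n) z = 0$.
\end{itemize}

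The only point needing care is that $\pi$ extended entrywise to rectangular matrices still respects products and adjoints: $\pi(xy) = \pi(x)\pi(y)$ and $\pi(x^*) = \pi(x)^*$. This is immediate entrywise, because $\pi$ is a unitary representation extended linearly to $\R\G$. I expect no real obstacle beyond this bookkeeping. The key step is the observation that $d_n^* d_n$ lies in $\Im D_n$, which converts the vanishing hypothesis into the cocycle condition.
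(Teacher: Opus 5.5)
Your proposal is correct and matches the paper's proof. In (1) you apply the matricial GNS construction and evaluate $\psi$ on $\Delta_n^+ = d_n^* d_n = D_n(I) \in \operatorname{im} D_n$ to get $\|\pi(d_n) z\|^2 = 0$, and in (2) you split $a = d_n^* x + y d_n$ just as the paper does. Your extra checks (that $\psi$ is hermitian and positive, and that the entrywise extension of $\pi$ respects products and adjoints of rectangular matrices) are details the paper leaves implicit.
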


\begin{proof}
$(1)$ Let $\psi: \mathbb{M}_{k_n}(\R \G)\to \R $ be a positive hermitian functional. By the GNS construction given in Proposition \ref{GNS for matrices of group algebras} there exists a unitary representation $(\pi, \H)$ of $\G$ and a cyclic cochain $z \in C^n(\G,\pi)$ such that for every $a \in \mathbb{M}_{k_n}(\R \G)$ we have:
   \begin{equation*}
       \psi(a) =  \langle \pi(a) z, z \rangle.
   \end{equation*}
Evaluating $\psi$ on  the element $\D_{n}^+=d_n^*d_n \in \Im D_{n}$, we obtain:
   \begin{equation*}
       0= \psi(\D_n^+) =  \langle \pi(\D_n^+) z, z\rangle = ||\pi(d_n) z||^2.
   \end{equation*}
    This shows that $z$ is indeed a cocycle. 
    
    $(2)$  Let $z \in \ker \pi(d_n)$ and $a\in \mathcal{SC}_n$. There exist rectangular matrices $x,y$ such that
    $a=d^*_n x+yd_n$. Therefore 
    \begin{equation*}
        \psi(a) = \langle \pi(a)z,z \rangle =\left\langle \pi(x)z,\pi(d_n)z\right\rangle + \left\langle \pi(y) \pi(d_n)z,z\right\rangle =0.
    \end{equation*}
\end{proof}

Consider the following diagram.
\begin{center}
\begin{tikzcd}
   \mathbb{M}_{k_{n-1}}(\R \G) 
   &  \operatorname{im} D_{n-1} \arrow[l,hook] \arrow{rd}{\phi}
   & \arrow{l}{\ \ \ D_{n-1}} \mathbb{M}_{k_n} (\R\G) \arrow{d}{\psi}\\
   && \R
\end{tikzcd}
\end{center}
Observe that if a functional $\psi:\mathbb{M}_{k_n}(\R\G) \to \R$ vanishes on $\ker D_{n-1}$, then it factors through $\operatorname{im}D_{n-1}$, i.e. it is of the form 
$$\psi=\phi\circ D_{n-1},$$
for some functional $\phi:\operatorname{im} D_{n-1}\to \R$. Indeed, 
setting, $\phi(x)=\psi(a)$ for any $a\in \mathbb{M}_{k_n}(\R\G)$ is a well-defined
functional. 
Moreover, $\phi$ is positive if and only if $\psi$ is positive,
by Proposition \ref{SOS in Im D_k are images of SOS}. Therefore 
Proposition \ref{higher GNS} can be expressed via the existence of positive functionals on $\operatorname{im} D_{n-1}$ as follows.

\begin{thm}[Higher cocycle GNS] \label{higher GNS - image version} 
Let $\G$ be a discrete group of type $F_{n+1}$ for some $n \in \Z_{\geq 1}$.
    The following conditions hold:
\begin{enumerate}
 \item[(1)] For every positive hermitian functional 
    $\phi:\operatorname{im} D_{n-1}\to \R$
    there exists 
    a unitary representation $\pi$ of $\G$ and a cyclic cocycle $z\in Z^n(\G,\pi)$ such that 
    $$\phi(d_{n-1}^* ad_{n-1}) = \langle \pi(a) z, z \rangle,$$
    for every $a\in \mathbb{M}_{k_n}(\R\G)$.
    
    \item[(2)] Suppose that $\mathcal{SC}_n = \ker D_{n-1}$. For every unitary representation $\pi$ of $\G$ 
    and every cocycle $z \in Z^n(\G, \pi)$ the functional $\phi: \operatorname{im} D_{n-1} \to \R$, defined by the formula
    $$\phi(d_{n-1}^*ad_{n-1}) = \langle \pi(a) z, z \rangle,$$ 
    for every $a \in \mathbb{M}_{k_n}(\R \G)$, is well-defined, positive and hermitian. 

\end{enumerate}
\end{thm}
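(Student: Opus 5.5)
The plan is to deduce Theorem \ref{higher GNS - image version} from Proposition \ref{higher GNS}, transporting functionals along the surjection $D_{n-1}:\mathbb{M}_{k_n}(\R\G)\to \operatorname{im} D_{n-1}$. Positivity is matched on both sides using Proposition \ref{SOS in Im D_k are images of SOS}, applied in degree $n-1$ (which needs $\G$ of type $F_n$, implied by $F_{n+1}$). The case $n-1=0$, where $\operatorname{im} D_{0}\subseteq \R\G$, should be handled by the same formal argument, though I would double check the relevant acyclicity there.

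For (1), I will set $\psi:=\phi\circ D_{n-1}$ on $\mathbb{M}_{k_n}(\R\G)$, and check the hypotheses of Proposition \ref{higher GNS}(1).
\begin{itemize}
\item \emph{Hermitian.} Since $D_{n-1}(a^*)=D_{n-1}(a)^*$ and $\phi$ is hermitian, $\psi$ is hermitian.
\item \emph{Positive.} For $a=\sum a_i^*a_i$ we have $D_{n-1}(a)=\sum (a_id_{n-1})^*(a_id_{n-1})$, which lies in $\operatorname{im}D_{n-1}\cap \S^2\mathbb{M}_{k_{n-1}}(\R\G)$. Positivity of $\phi$ on $\operatorname{im}D_{n-1}$ means nonnegativity on exactly this cone, so $\psi(a)\ge 0$.
\item \emph{Vanishing on $\operatorname{im} D_n$.} This holds because $D_{n-1}\circ D_n=0$.
\end{itemize}
Proposition \ref{higher GNS}(1) then gives $\pi$ and a cyclic $z\in Z^n(\G,\pi)$ with $\phi(d_{n-1}^*ad_{n-1})=\psi(a)=\langle\pi(a)z,z\rangle$.

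For (2), I will define $\psi(a)=\langle \pi(a)z,z\rangle$. By Proposition \ref{higher GNS}(2), $\psi$ vanishes on $\mathcal{SC}_n$, which equals $\ker D_{n-1}$ by hypothesis. Hence $\psi$ factors through $\operatorname{im}D_{n-1}$: if $D_{n-1}(a)=D_{n-1}(a')$ then $a-a'\in\ker D_{n-1}$, so $\psi(a)=\psi(a')$. This makes $\phi$ well defined and linear.

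Hermitianity follows from $\psi(a^*)=\psi(a)$ together with $D_{n-1}(a)^*=D_{n-1}(a^*)$.

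Positivity is the only nontrivial point. Take $x\in \operatorname{im}D_{n-1}\cap\S^2\mathbb{M}_{k_{n-1}}(\R\G)$. Then $x\in\ker D_{n-2}\cap\S^2$, and by Proposition \ref{SOS in Im D_k are images of SOS} (in degree $n-1$) we can write $x=D_{n-1}(b)$ with $b\in\S^2\mathbb{M}_{k_n}(\R\G)$. In fact only the first equality of that proposition is needed, namely that positive elements of the image are images of positive elements. So $\phi(x)=\psi(b)=\langle\pi(b)z,z\rangle\ge0$, since $\pi(b)$ is a sum of operators $\pi(b_i)^*\pi(b_i)$.

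The main obstacle is this positivity step: an element of $\operatorname{im}D_{n-1}$ that is a sum of squares in $\mathbb{M}_{k_{n-1}}(\R\G)$ must be written as $D_{n-1}$ of a sum of squares. That is precisely the content of Proposition \ref{SOS in Im D_k are images of SOS}, which in turn rests on the acyclicity of Lemma \ref{left mult by d* is acyclic/ right mult by d is acyclic}.
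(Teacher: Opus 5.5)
Your proposal is correct and follows the paper's own argument. You set $\psi=\phi\circ D_{n-1}$, use $D_{n-1}D_n=0$ for (1) and $\mathcal{SC}_n=\ker D_{n-1}$ to factor $\psi$ through $\operatorname{im}D_{n-1}$ for (2), invoke Proposition \ref{higher GNS}, and match positivity on the two sides via Proposition \ref{SOS in Im D_k are images of SOS}. The case $n=1$ that you flag, which the paper also passes over silently, does go through: $\operatorname{im}D_0$ lies in the augmentation ideal, so $\sum a_i^*a_i\in\operatorname{im}D_0$ forces $\sum_i\epsilon(a_i)^2=0$ for the augmentation $\epsilon$, hence each $a_i\in I(\G)$, i.e.\ $a_i=b_id_0$ by connectedness of $X$ (as in the remark after the theorem), giving $\sum a_i^*a_i=D_0(\sum b_i^*b_i)$.
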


   Observe that the above two constructions are inverses of one another. Namely, if we start with  a unitary representation $(\pi, \H)$ of $\G$, a cocycle $z \in Z^n(\G, \pi)$, we first consider the functional $\phi: \operatorname{im} D_{n-1} \to \R$ constructed in $(2)$ and then we consider the unitary representation $(\pi', \H')$ and the cyclic cocycle $z'$ obtained by applying $(1)$ to $\phi$. By Proposition \ref{GNS for matrices of group algebras}, there exists a unitary operator $U : \H' \to \H $ intertwining $\pi$ and $\pi'$ mapping entries of $z'$ to entries of $z$.

\begin{rem}\normalfont
Note that for $n=1$, Theorem \ref{higher GNS - image version} gives the classical Gelfand-Naimark-Segal construction of Theorem 
\ref{Classical conditionally negative GNS}. Indeed, when restricting to hermitian matrices
we have 
$$\Im (D_0|_{\mathbb{M}_{k_1} (\R \G)^h}) = I(\G)^h,$$ 
where $I (\G)^h$ denotes the self-adjoint elements in the augmentation ideal of $\R \G$, which is generated by the set $\{ 2 - \g - \g^{-1}: \g \in \G \}$ (for the non-hermitian case, compare with \cite[Lemma 4]{mizerka-nowak}).

Note that $\Im (D_0|_{\mathbb{M}_{k_1} (\R \G)^h}) \subseteq I(\G)^h$ is evident. To see the converse inclusion, fix a finite symmetric generating set $S$ of $\G$. For $\g \in \G$ fix a decomposition $$\g = s_1 \cdots s_n$$ of $\g$ as a word in $S$. For $s \in S$ let 
$$\g(s) = \{ i, s_i = s \}$$ 
and for $s, t \in S$ define 
\begin{equation*}
    a_t = \sum_{i \in \g(t)}  s_1 \cdots s_{i-1}. 
\end{equation*}
Then the horizontal vector 
$a = (a_t)_{t \in S}$ satisfies 
$1- \g = a d_0$
and consequently $$(1-\g)^* (1- \g) = d_0^* a^* a d_0.$$ 
If $\phi : I(\G) \to \R$ is a positive hermitian functional, let $(\pi, \H)$ be the unitary $\G$-representation and 
$z \in Z^1(\G, \pi)$ be the cocycle given by Proposition \ref{higher GNS - image version} $(1)$, satisfying $\phi (d_0^* a d_0) = \langle \pi(a) z, z \rangle$. Then
\begin{align*}
    2 \phi (1 - \g) & = \phi (2 - \g - \g^{-1}) = \phi (d_0^* a^*a d_0)\\
    &= \langle \pi(a^*a) z, z \rangle = \Vert\pi(a)z\Vert^2 \\
    & = \left\Vert \sum_{s \in S} \sum_{i \in \g(s)} \pi(s_1\ldots s_{i-1}) z(s) \right\Vert^2\\
    &= \left\Vert \sum_{i =1}^n \pi(s_1\ldots s_{i-1}) z(s_i) \right\Vert^2 = \left\Vert z(\g)\right\Vert^2.
\end{align*}

\end{rem}

We now turn to give a characterization of vanishing of higher unitary cohomology in terms of 
an extension property for positive functionals. 
The first observation is that functionals induced by coboundaries can be extended to the whole matrix algebra.


\begin{prop} \label{Coboundaries vs extending functionals}
For every positive hermitian functional 
    $\phi:\operatorname{im} D_{n-1}\to \R$, if the associated cocycle $z$ in Theorem \ref{higher GNS - image version} $(1)$ is a coboundary then the functional $\phi$ can be extended to a positive hermitian functional $\overline{\phi}:\mathbb{M}_{k_{n-1}}(\R\G)\to\R$.

\end{prop}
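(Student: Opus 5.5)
Suppose the cocycle $z \in Z^n(\G,\pi)$ from Theorem \ref{higher GNS - image version} $(1)$ is a coboundary, so $z = \pi(d_{n-1}) w$ for some $w \in \H^{k_{n-1}}$. I will take the natural candidate
$$\overline{\phi}(b) := \langle \pi(b) w, w \rangle_{\H^{k_{n-1}}}, \qquad b \in \mathbb{M}_{k_{n-1}}(\R\G).$$
It is a matrix coefficient of the $*$-representation of $\mathbb{M}_{k_{n-1}}(\R \G)$ induced by $\pi$. The plan is to check first that $\overline{\phi}$ is positive and hermitian, and then that it agrees with $\phi$ on $\operatorname{im} D_{n-1}$.

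\emph{Positivity and hermitianity.} Since $\pi$ is unitary, applying it entrywise is a $*$-homomorphism, so $\pi(b^*) = \pi(b)^*$. Hence $\overline{\phi}(b^*) = \langle w, \pi(b) w\rangle = \overline{\phi}(b)$, using that everything is real. Likewise $\overline{\phi}(\sum a_i^* a_i) = \sum \|\pi(a_i) w\|^2 \geq 0$.

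\emph{Extension.} For $x = d_{n-1}^* a d_{n-1} \in \operatorname{im} D_{n-1}$ with $a \in \mathbb{M}_{k_n}(\R\G)$, multiplicativity of $\pi$ on rectangular matrices (Remark after Proposition \ref{GNS for matrices of group algebras}) gives
$$\overline{\phi}(x) = \langle \pi(d_{n-1})^* \pi(a) \pi(d_{n-1}) w, w\rangle = \langle \pi(a) z, z\rangle = \phi(d_{n-1}^* a d_{n-1}).$$
Thus $\overline{\phi}|_{\operatorname{im} D_{n-1}} = \phi$, as required.

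The only point needing care is that $\phi$ is determined by the formula in Theorem \ref{higher GNS - image version} $(1)$ on all of $\operatorname{im} D_{n-1}$. This holds because every element of $\operatorname{im} D_{n-1}$ has the form $d_{n-1}^* a d_{n-1}$, and the identity holds for every $a$. I expect no real obstacle beyond this bookkeeping of rectangular matrix products under $\pi$.
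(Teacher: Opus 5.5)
Your proposal is correct and follows the paper's proof: you define $\overline{\phi}(b) = \langle \pi(b)w, w\rangle$ for a primitive $w$ with $z = \pi(d_{n-1})w$, and check $\overline{\phi}(d_{n-1}^* a d_{n-1}) = \langle \pi(a)z, z\rangle$ by moving $\pi(d_{n-1})$ across the inner product. Your explicit verification of positivity and hermitianity, which the paper leaves implicit, is a harmless addition.
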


\begin{proof}
Given the functional $\phi:\operatorname{im} D_{n-1}\to \R$,
such that $\phi(d_{n-1}^*ad_{n-1})=\langle \pi(a)z,z\rangle$,
assume that $z=d_{n-1}c$ for some $c\in C^{n-1}(\G, \pi)$.
Let $\overline{\phi}:\mathbb{M}_{k_{n-1}}(\R\G)\to \R$ be defined by 
$$\overline{\phi}(m): =\langle \pi(m)c,c\rangle$$
for $m \in \mathbb{M}_{k_{n-1}}(\R\G)$.
For $m=d_{n-1}^*ad_{n-1}$ we have
\begin{align*}
    \overline{\phi}(d_{n-1}^*ad_{n-1}) &= \langle \pi(ad_{n-1})c,\pi(d_{n-1})c\rangle\\
    &= \langle \pi(a)z,z\rangle\\
    &=\phi(d_{n-1}^*ad_{n-1}).
\end{align*}
\end{proof}

\begin{rem} \normalfont
After this paper was posted on the arXiv, Uri Bader and Saar Bader have informed us that they have obtained independently a statement very similar to Proposition 
\ref{Coboundaries vs extending functionals} which includes a converse statement.
\end{rem}

The following Theorem shows that if one assumes the conclusion of Proposition \ref{Coboundaries vs extending functionals} for all positive hermitian functionals $\phi : \Im D_{n-1} \to \R$, then all their associated cocycles are coboundaries.
This gives a characterization of vanishing of unitary cohomology as a property of extension of functionals.

\begin{thm} \label{Vanishing of cohomology vs extension of functionals}
    Let $\G$ be a group of type $F_{n+1}$ for some $n \in \Z_{\geq 1}$. Suppose that $\mathcal{SC}_n = \ker D_{n-1}$. The following conditions are equivalent:
    \begin{enumerate}
        \item[(1)]  $H^n (\G, \pi) = 0$ for every unitary representation $(\pi, \H)$ of $\G$;
    \item[(2)] Every positive hermitian functional $\phi:\operatorname{im} D_{n-1} \to \R$ can be extended to a positive hermitian functional $\overline{\phi}:\mathbb{M}_{k_{n-1}}(\R\G)\to\R$.
    \end{enumerate}
\end{thm}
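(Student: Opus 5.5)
We prove the two implications separately; Theorem \ref{higher GNS - image version} supplies the dictionary between functionals and cocycles.

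\textbf{(1) $\Rightarrow$ (2).} Let $\phi:\operatorname{im} D_{n-1}\to\R$ be positive and hermitian. By Theorem \ref{higher GNS - image version} (1) there are a unitary representation $\pi$ and a cyclic cocycle $z\in Z^n(\G,\pi)$ with $\phi(d_{n-1}^*ad_{n-1})=\langle \pi(a)z,z\rangle$. By (1), $z$ is a coboundary. Proposition \ref{Coboundaries vs extending functionals} then extends $\phi$ to $\overline{\phi}(m)=\langle\pi(m)c,c\rangle$, where $z=\pi(d_{n-1})c$. This $\overline{\phi}$ is a matrix coefficient, so it is automatically positive and hermitian. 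This direction does not use the hypothesis $\mathcal{SC}_n=\ker D_{n-1}$.

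\textbf{(2) $\Rightarrow$ (1).} Fix a unitary representation $(\pi,\H)$ and a cocycle $z\in Z^n(\G,\pi)$.

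1. Since $\mathcal{SC}_n=\ker D_{n-1}$, Theorem \ref{higher GNS - image version} (2) gives a well-defined positive hermitian functional $\phi(d_{n-1}^*ad_{n-1})=\langle\pi(a)z,z\rangle$ on $\operatorname{im} D_{n-1}$.

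2. By (2), $\phi$ extends to a positive hermitian $\overline{\phi}$ on $\mathbb{M}_{k_{n-1}}(\R\G)$. Proposition \ref{GNS for matrices of group algebras} writes $\overline{\phi}(m)=\langle\pi'(m)c,c\rangle$ for a unitary representation $\pi'$ and a cyclic $c\in(\H')^{k_{n-1}}$.

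3. Set $z'=\pi'(d_{n-1})c$. For every $a$,
\[
\langle\pi'(a)z',z'\rangle=\overline{\phi}(d_{n-1}^*ad_{n-1})=\langle\pi(a)z,z\rangle .
\]
So $z$ and $z'$ induce the same positive functional on $\mathbb{M}_{k_n}(\R\G)$.

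4. Apply the uniqueness part of Proposition \ref{GNS for matrices of group algebras} on the cyclic subrepresentations. Let $\H_z\subseteq\H$ be the closed span of $\pi(\R\G)z_j$, and $\H'_{z'}\subseteq\H'$ the closed span of $\pi'(\R\G)z'_j$. This yields a unitary intertwiner $U:\H'_{z'}\to\H_z$ with $Uz'=z$ entrywise.

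5. The main obstacle is here: $c$ lives in $\H'$ but need not lie in $\H'_{z'}$, so $U$ cannot be applied to it directly. Let $P$ be the orthogonal projection of $\H'$ onto $\H'_{z'}$. This subspace is $\G$-invariant, so $P$ commutes with $\pi'(\G)$, hence with every $\pi'(m)$, applied entrywise.
\begin{itemize}
\item Projecting gives $\pi'(d_{n-1})(Pc)=P\,\pi'(d_{n-1})c=Pz'=z'$, so $z'$ is a coboundary inside $\H'_{z'}$.
\item Transporting through $U$ gives $z=\pi(d_{n-1})(UPc)$, with $UPc\in\H_z^{k_{n-1}}\subseteq\H^{k_{n-1}}$.
\end{itemize}
Thus $z$ is a coboundary in $\pi$. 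Since $z$ was arbitrary, $H^n(\G,\pi)=0$ for every unitary representation $\pi$.

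The only care needed is that the intertwining relation $U\pi'(d_{n-1})=\pi(d_{n-1})U$ holds for rectangular matrices, as in the Remark following Proposition \ref{GNS for matrices of group algebras}. Non-cyclicity of $z$ in $\H$ causes no problem, because we only need to land in $\H^{k_{n-1}}$.
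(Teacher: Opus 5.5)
Your proof is correct. The direction $(1)\Rightarrow(2)$ is the paper's: a direct application of Proposition \ref{Coboundaries vs extending functionals}, which indeed does not use $\mathcal{SC}_n=\ker D_{n-1}$.

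For $(2)\Rightarrow(1)$ you take a genuinely different route.

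\textbf{The paper's argument.} It treats only harmonic cocycles $z\in\ker\pi(\D_n)$. It evaluates the extension on $(\D_{n-1}^+)^2=d_{n-1}^*\D_n d_{n-1}$, which gives $\|\pi'(\D_{n-1}^+)c\|^2=\langle\pi(\D_n)z,z\rangle=0$. It then concludes $\|z\|^2=\phi(\D_{n-1}^+)=\langle\pi'(\D_{n-1}^+)c,c\rangle=0$. This only yields $\overline{H}^n(\G,\pi)=\ker\pi(\D_n)=0$. The passage to non-reduced vanishing is handed off to the argument of \cite[Theorem 3.7]{bader-sauer}, which needs vanishing of reduced cohomology for all unitary representations simultaneously.

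\textbf{Your argument.} You show, for each individual cocycle $z$, that extendability of its functional forces $z$ to be a coboundary. You use two ingredients:
\begin{itemize}
\item the GNS-uniqueness intertwiner between the cyclic subrepresentations generated by $z$ and by $z'=\pi'(d_{n-1})c$;
\item the $\G$-equivariant projection $P$ that moves $c$ into the cyclic subspace.
\end{itemize}
Together these transport a primitive of $z'$ back to $\pi$.

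\textbf{Comparison.} Your route costs slightly more care: $c$ need not be cyclic, and you need the intertwining relation for rectangular matrices. In return it is self-contained, with no Hodge isomorphism and no appeal to Bader--Sauer. It also proves a stronger, representation-by-representation converse of Proposition \ref{Coboundaries vs extending functionals}. This is the kind of converse the paper attributes to U.~Bader and S.~Bader. The paper's route is a two-line computation with the Laplacians, but it relies on that external upgrade from reduced to non-reduced vanishing.
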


\begin{proof}
    The implication $(1) \implies (2)$ is a direct consequence of Proposition \ref{Coboundaries vs extending functionals} $(1)$. We will now show the converse.
    
    Suppose that condition $(2)$ holds, let $(\pi, \H)$ be a unitary representation of $\G$ and $z \in Z^n (\G, \pi)$ be a harmonic cocycle. Using Theorem \ref{higher GNS - image version} $(2)$, there is a positive hermitian functional $\phi:\operatorname{im} D_{n-1} \to \R$ such that 
    $$\phi(d_{n-1}^*ad_{n-1}) = \langle \pi(a) z, z \rangle$$ 
    for every $a \in \mathbb{M}_{k_n}(\R \G)$. By hypothesis, this functional can be extended to a positive hermitian functional $\overline{\phi}:\mathbb{M}_{k_{n-1}}(\R\G)\to\R$. Proposition \ref{GNS for matrices of group algebras} yields a unitary representation $(\pi', \H')$ of $\G$ and a cochain $c \in (\H')^{k_{n-1}}$ such that
    $$ \overline{\phi}(a) = \langle \pi'(a) c, c \rangle $$
    for every $a \in \mathbb{M}_{k_{n-1}}(\R \G)$. We have
    $$ 0 = \langle \pi(\D_n) z, z \rangle = \phi ((\D_{n-1}^+)^2 ) = \langle \pi'(\D_{n-1}^+)^2 c, c \rangle = ||\pi'(\D_{n-1}^+)c||^2. $$
    Hence $\pi'(\D_{n-1}^+)c = 0$ and 
    $$ ||z||^2 = \phi(\D_{n-1}^+) = \langle \pi'(\D_{n-1}^+) c, c \rangle = 0.  $$
    Thus we have shown that $\overline{H}^n(\G, \pi ) = \ker \pi (\D_n) =  0$ for every unitary representation $\pi$ of $\G$. Using the argument in \cite[Theorem 3.7]{bader-sauer}, we obtain that $H^n(\G, \pi ) = 0$ for every unitary representation $\pi$ of $\G$.
\end{proof}

\section{Reducedness of cohomology and sums of squares}

In this section we will consider algebraic conditions,
involving sums of squares,
for the cohomology with unitary coefficients 
being reduced or for it to vanish.
The following result, proved in \cite{bader-nowak} characterizes when cohomology is reduced for every unitary representation (here we state this differently, the equivalence follows from Positivstellensatz \cite[Theorem 10.20]{schmudgen-book}, and a standard direct sum argument for the uniformity of $\l$). 

\begin{thm}\cite[Proposition 16 3.]{bader-nowak} \label{Characterization of reducedness by Bader-Nowak}
    Let $\G$ be a discrete group of type $F_{n+1}$. The following conditions are equivalent: 
    \begin{enumerate}
        \item[(1)] For every unitary representation $(\pi, \H)$ of $\G$, the cohomology spaces $H^n (\G, \pi)$ are Hausdorff.
    \item[(2)] There exists $\l > 0$ such that
    \begin{equation*}
        (\D_{n-1}^+)^2 - \l \D_{n-1}^+ \in \overline{\S^2 \mathbb{M}_{k_{n-1}}(\R\G)}.
    \end{equation*}
    \end{enumerate}
\end{thm}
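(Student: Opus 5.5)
We prove the two implications separately. In each direction we pass through the analytic criterion for reducedness: there is a uniform constant $\l>0$ such that, for every unitary representation $(\pi,\H)$,
$$\pi(\D_{n-1}^+)^2 \geq \l\, \pi(\D_{n-1}^+)$$
as operators on $\H^{k_{n-1}}$. Equivalently, $\pi(\D_{n-1}^+)$ has spectrum contained in $\{0\}\cup[\l,\infty)$.

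First we show that this spectral gap is equivalent to $H^n(\G,\pi)$ being Hausdorff. The image of $\pi(d_{n-1})$ is closed if and only if $\pi(d_{n-1})$ is bounded below on $(\ker \pi(d_{n-1}))^\perp$. Since $\pi(\D_{n-1}^+)=\pi(d_{n-1})^*\pi(d_{n-1})$, this happens if and only if $\pi(\D_{n-1}^+)$ has a spectral gap above $0$. To make $\l$ uniform, we argue by contradiction. If every $H^n(\G,\pi)$ is Hausdorff but no uniform $\l$ exists, we choose representations $\pi_j$ whose gaps $\l_j$ tend to $0$. Then $\bigoplus_j \pi_j$ has no spectral gap, so its cohomology is not Hausdorff, which is a contradiction.

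\textbf{$(2)\Rightarrow(1)$.} Suppose $(\D_{n-1}^+)^2-\l\D_{n-1}^+ + \varepsilon\, 1 \in \S^2\mathbb{M}_{k_{n-1}}(\R\G)$ for every $\varepsilon>0$. Here we use the description of the closure recalled in Section 1, with order unit $1$. Applying a unitary representation $\pi$, each sum of squares maps to a positive operator, so
$$\pi(\D_{n-1}^+)^2-\l\pi(\D_{n-1}^+) \geq -\varepsilon$$
for every $\varepsilon>0$. Hence this operator is positive, the spectral gap holds, and $(1)$ follows.

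\textbf{$(1)\Rightarrow(2)$.} We use the Positivstellensatz for the Archimedean $*$-algebra $\mathbb{M}_{k_{n-1}}(\R\G)$ (\cite[Theorem 10.20]{schmudgen-book}). It says that a hermitian element $x$ lies in $\overline{\S^2}$ if and only if $\rho(x)\geq 0$ for every bounded $*$-representation $\rho$. Equivalently, $\psi(x)\ge0$ for every positive functional $\psi$; this follows from a Hahn–Banach separation argument in the algebraic topology, together with the GNS construction.

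By Proposition \ref{*-reps of matrices decompose as a unirep coordinate-wise}, every bounded $*$-representation $\rho$ is conjugate to one of the form $\pi(\cdot)$ for some unitary representation $\pi$ of $\G$. The uniform gap from the first step then gives $\rho\big((\D_{n-1}^+)^2-\l\D_{n-1}^+\big)\geq 0$ for all such $\rho$, which is exactly condition $(2)$.

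The main obstacle is to state the Positivstellensatz in the precise form we need, namely as closure with respect to the algebraic topology. Once that is in place, the uniformity of $\l$ over all representations, obtained from the direct sum argument, is the only other essential point.
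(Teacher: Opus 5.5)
Your proposal is correct and follows the route the paper indicates. The paper gives no detailed proof: it combines the analytic spectral-gap criterion for $\pi(\D_{n-1}^+)$ from Bader--Nowak (which you rederive from the closed-range characterization of $\pi(d_{n-1})$), a direct sum argument for the uniformity of $\l$, and the Archimedean Positivstellensatz together with the decomposition of bounded $*$-representations of $\mathbb{M}_{k_{n-1}}(\R\G)$. Your argument carries out exactly these three steps.
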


In view of the above characterization of 
cohomology with unitary coefficients being reduced,
it is a natural question if the property 
\begin{equation*}
        (\D_{n-1}^+)^2 - \l \D_{n-1}^+ \in \S^2\mathbb{M}_{k_{n-1}}(\R\G)
    \end{equation*}
is equivalent to the one given in the previous result. 
As it surprisingly turns out, when the conclusion of Theorem \ref{theorem: ker D_i=SC_i} holds, this property characterizes cohomological vanishing, which is strictly stronger than reducedness of cohomology for $n \geq 2$.

\begin{thm}\label{theorem: Higher Ozawa equation is equivalent to Higher (T)}
    Let $\G$ be a discrete group of type $F_{n+1}$ for some $n \in \Z_{\geq 1}$ and suppose that $\ker D_{n-1} = \mathcal{SC}_n$. The following conditions are equivalent: 
    \begin{enumerate}
        \item[(1)] For every unitary representation $(\pi, \H)$ of $\G$, we have $H^{n} (\G, \pi) = 0$;
    \item[(2)] There exists $\l > 0$ such that
    \begin{equation*}
        \D_{n} - \l I \in \S^2 \mathbb{M}_{k_n}(\R\G);
    \end{equation*}
    \item[(3)] There exists $\l > 0$ such that
    \begin{equation*}
        (\D_{n-1}^+)^2 - \l \D_{n-1}^+ \in \S^2  \mathbb{M}_{k_{n-1}}(\R\G).
    \end{equation*}
        \end{enumerate}
\end{thm}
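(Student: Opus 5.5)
The plan is to prove the cycle (1)$\Leftrightarrow$(2), (2)$\Rightarrow$(3), (3)$\Rightarrow$(1).

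\emph{(1)$\Leftrightarrow$(2).} This is the characterization of vanishing from \cite{bader-nowak} (algebraic spectral gap of the full Laplacian $\D_n$). It does not use the hypothesis $\ker D_{n-1}=\mathcal{SC}_n$, so I will quote it.

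\emph{(2)$\Rightarrow$(3).} Conjugate the relation in (2) by $d_{n-1}$, that is, apply $D_{n-1}$. Since $d_nd_{n-1}=0$, the $\D_n^+$ part disappears and
$$d_{n-1}^*(\D_n-\l I)d_{n-1}=d_{n-1}^*d_{n-1}d_{n-1}^*d_{n-1}-\l d_{n-1}^*d_{n-1}=(\D_{n-1}^+)^2-\l\D_{n-1}^+ .$$
Moreover $D_{n-1}$ sends $\sum x_i^*x_i$ to $\sum (x_id_{n-1})^*(x_id_{n-1})$. By Lemma \ref{Squares of non-square matrices} $(1)$ this is a sum of hermitian squares in $\mathbb{M}_{k_{n-1}}(\R\G)$, which gives (3).

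\emph{(3)$\Rightarrow$(1).} This is the substantive direction and the only place where $\ker D_{n-1}=\mathcal{SC}_n$ is used.

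First, the left-hand side of (3) equals $D_{n-1}(\D_n^- - \l I)$, by the same computation as above. So $\sum M_i^*M_i$ lies in $\Im D_{n-1}\cap \S^2\mathbb{M}_{k_{n-1}}(\R\G)$.

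By Proposition \ref{SOS in Im D_k are images of SOS}, there is $b=\sum_j b_j^*b_j\in\S^2\mathbb{M}_{k_n}(\R\G)$ with $\sum M_i^*M_i=D_{n-1}(b)$. Hence $\D_n^- - \l I - b\in\ker D_{n-1}=\mathcal{SC}_n$.

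Now fix a unitary representation $\pi$ and a cocycle $z\in Z^n(\G,\pi)$. By Proposition \ref{higher GNS} $(2)$, the functional $a\mapsto\langle\pi(a)z,z\rangle$ vanishes on $\mathcal{SC}_n$. Evaluating it on $\D_n^- - \l I - b$ gives
$$\|\pi(d_{n-1}^*)z\|^2-\l\|z\|^2=\sum_j\|\pi(b_j)z\|^2\ge 0 .$$
Thus $\pi(d_{n-1}d_{n-1}^*)\ge\l$ on $Z^n(\G,\pi)=\ker\pi(d_n)$.

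Since $\Im\pi(d_{n-1})\subseteq Z^n(\G,\pi)$, the operator $\pi(d_{n-1}d_{n-1}^*)$ restricts to a bounded self-adjoint operator on the closed subspace $Z^n(\G,\pi)$. It is bounded below by $\l>0$, hence invertible there. So every cocycle has the form $z=\pi(d_{n-1})\pi(d_{n-1}^*)w$ for some $w\in Z^n(\G,\pi)$, and is therefore a coboundary. This gives $H^n(\G,\pi)=0$ directly, without passing through reduced cohomology.

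The main obstacle is the step from (3), an identity in degree $n-1$, to the operator inequality on degree-$n$ cocycles. This transfer is exactly where Proposition \ref{SOS in Im D_k are images of SOS} (lifting the sum of squares through $D_{n-1}$) and the hypothesis $\ker D_{n-1}=\mathcal{SC}_n$ (so that the cocycle functional kills the ambiguity in the lift) are both needed.
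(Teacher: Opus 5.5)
Your proof is correct but concludes (3)$\Rightarrow$(1) by a different route from the paper. (2)$\Rightarrow$(3) is exactly the paper's argument. For (3)$\Rightarrow$(1) you use the same two ingredients: lifting the sum of squares through $D_{n-1}$ via Proposition \ref{SOS in Im D_k are images of SOS}, and the vanishing of $a\mapsto\langle\pi(a)z,z\rangle$ on $\mathcal{SC}_n=\ker D_{n-1}$. The paper packages these as the positive functional $\phi$ of Theorem \ref{higher GNS - image version} $(2)$.

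The difference is in how you finish. The paper evaluates only at harmonic $z$, obtaining $\ker\pi(\D_n)=0$, i.e.\ $\overline{H}^n(\G,\pi)=0$. It then needs the separate fact that (3) forces $H^n(\G,\pi)$ to be Hausdorff, which it takes from Theorem \ref{Characterization of reducedness by Bader-Nowak} (using $\S^2\subseteq\overline{\S^2}$). You observe that harmonicity is unnecessary, since $\langle\pi(\D_n^+)z,z\rangle=0$ for every cocycle. This gives the coercivity estimate $\|\pi(d_{n-1}^*)z\|^2\ge\l\|z\|^2$ on all of $Z^n(\G,\pi)$. Invertibility of $\pi(d_{n-1}d_{n-1}^*)$ on $Z^n(\G,\pi)$ then yields vanishing and reducedness in a single step, with $z=\pi(d_{n-1})c$ and $\|c\|\le\l^{-1/2}\|z\|$. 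Your version is self-contained and needs neither the Hodge isomorphism nor a separate reducedness argument. The paper's version keeps the argument within its positive-functional/GNS framework, which is the theme of the paper.

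Two small points. First, for $n=1$ you use Proposition \ref{SOS in Im D_k are images of SOS} at index $0$, which lies outside its stated range. The paper does the same implicitly, through the positivity of $\phi$ in Theorem \ref{higher GNS - image version} $(2)$. The statement remains true there, with connectedness of $X$ (i.e.\ the augmentation) replacing acyclicity in degree $0$. Second, for (1)$\Rightarrow$(2), the Main Theorem of \cite{bader-nowak} characterizes $\D_n-\l I\in\S^2\mathbb{M}_{k_n}(\R\G)$ by vanishing of $H^n(\G,\pi)$ \emph{together with} reducedness of $H^{n+1}(\G,\pi)$ for all unitary $\pi$. So you also need \cite[3.11]{bader-sauer}, which says that vanishing of $H^n$ for all unitary representations forces $H^{n+1}$ to be Hausdorff; the paper combines the two results in exactly this way.
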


\begin{proof}
The equivalence between the conditions $(1)$ and $(2)$ is \cite[Main Theorem]{bader-nowak} combined with \cite[3.11]{bader-sauer}. We will now show that these two conditions are equivalent to $(3)$.

The implication $(2) \implies (3)$ follows from Lemma \ref{Squares of non-square matrices} and the equality
    \begin{equation*}
        (\D_{n-1}^+)^2 - \l \D_{n-1}^+ = d_{n-1}^* (\D_n -\l I)d_{n-1} = D_{n-1}(\D_n - \l I).
    \end{equation*}

We will now show $(3) \implies (1)$. 
Let $(\pi, \H)$ be a unitary representation of $\G$ and $z \in \ker \pi (\D_{n})$ be a harmonic cochain. In particular, $z$ is a cocycle and Theorem \ref{higher GNS - image version} $(2)$ (which we can use since we are assuming $\ker D_{n-1} = \mathcal{SC}_n$) implies that there exists a positive functional $\phi : \operatorname{im} D_{n-1} \to \R$ such that
\begin{equation*}
    \phi(d_{n-1}^* a d_{n-1}) = \langle \pi(a) z, z \rangle,
\end{equation*}
for every $a \in  \mathbb{M}_{k_n}(\R \G)$. Since by assumption $(\D_{n-1}^+)^2 - \l \D_{n-1}^+ \in \S^2  \mathbb{M}_{k_{n-1}}(\R\G)$ and $\phi$ is positive, we have
\begin{equation*}
    \phi\left((\D_{n-1}^+)^2 - \l \D_{n-1}^+\right) \geq 0.
\end{equation*}
On the other hand, harmonicity of $z$ gives
\begin{equation*}
    \phi\left((\D_{n-1}^+)^2 - \l \D_{n-1}^+\right) = \langle \pi (\D_n -\l I) z, z \rangle = - \l ||z||^2.
\end{equation*}
These two conditions force $z= 0$. 
It follows that 
$$\overline{H}^n(\G, \pi) = \ker \pi (\D_n) = 0.$$ 
However, by Theorem \ref{Characterization of reducedness by Bader-Nowak} the assumed condition $(3)$ implies that cohomology of $\G$ with any unitary coefficients is reduced. Hence
\begin{equation*}
    H^n(\G, \pi) = \overline{H}^n(\G, \pi) = 0.
\end{equation*}

\end{proof}

\begin{rem}\normalfont
    For $n =1$ the main idea behind the proof of Theorem \ref{theorem: Higher Ozawa equation is equivalent to Higher (T)} was outlined in \cite[p. 2554]{ozawa-substitute}.
\end{rem}

\begin{cor}
    Let $\G$ be a cocompact lattice in $\mathrm{SL}_{r+1}(\mathbb{Q}_p)$ with $r \geq 2$ and $p$ a prime (or more generally in any simple algebraic group of rank $r$ defined over a non-Archimedean local field). There exists a constant $\l > 0$ such that
    \begin{equation*}
        \D_{r-1} - \l I \in \S^2 \mathbb{M}_{k_{r-1}}(\R\G), \qquad (\D_{r-1}^+)^2 - \l \D_{r-1}^+ \in \overline{\S^2 \mathbb{M}_{k_{r-1}}(\R\G)}.
    \end{equation*}
    On the other hand, for any $\l > 0$ we have:
    \begin{equation*}
        (\D_{r-1}^+)^2 - \l \D_{r-1}^+ \notin \S^2 \mathbb{M}_{k_{r-1}}(\R\G).
    \end{equation*}
\end{cor}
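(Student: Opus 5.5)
The plan is to treat the three assertions separately: the first two come from known results for such lattices, and the third follows from Theorem \ref{theorem: Higher Ozawa equation is equivalent to Higher (T)} together with the existence of non-vanishing cohomology in degree $r$. Let $\G$ be a cocompact lattice in $G=\mathrm{SL}_{r+1}(\mathbb{Q}_p)$. It acts freely, after passing to a torsion-free finite index subgroup if needed, and cocompactly on the Bruhat--Tits building $X$. This building is contractible of dimension $r$, so $\G$ is of type $F$ with geometric dimension $\le r$. I will work with the chain complex coming from $X$, with degrees shifted so that the corollary's index $r-1$ plays the role of $n-1$ in the theorems. Concretely, I set $n=r$, so $\Delta_{r-1}^+ = d_{r-1}^*d_{r-1}$ is the one-sided Laplacian just below degree $r$.

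\textbf{First assertion.} Garland's method, in the form of Ballmann--Świątkowski, Dymara--Januszkiewicz or Bader--Nowak, gives $H^{r-1}(\G,\pi)=0$ for every unitary $\pi$, since $r-1$ lies below the rank. By the equivalence $(1)\Leftrightarrow(2)$ of Theorem \ref{theorem: Higher Ozawa equation is equivalent to Higher (T)} applied in degree $r-1$ (the main theorem of \cite{bader-nowak}, which needs no split-conjugation hypothesis), this yields $\D_{r-1}-\l I\in\S^2\mathbb{M}_{k_{r-1}}(\R\G)$.

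\textbf{Second assertion.} By \cite[Proposition 19]{bader-nowak}, $H^r(\G,\pi)$ is Hausdorff for every unitary $\pi$. Theorem \ref{Characterization of reducedness by Bader-Nowak} with $n=r$ then gives $(\D_{r-1}^+)^2-\l\D_{r-1}^+\in\overline{\S^2\mathbb{M}_{k_{r-1}}(\R\G)}$. One may take the minimum of the two constants $\l$: shrinking $\l$ preserves both memberships, because $\l I\in\S^2$ and $\l\D^+_{r-1}\in\S^2$.

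\textbf{Third assertion.} I argue by contradiction and suppose $(\D_{r-1}^+)^2-\l\D_{r-1}^+\in\S^2$ for some $\l>0$. Since the geometric dimension is $\le r$ and $r\ge (r+1)/2$ for $r\ge1$, Theorem \ref{theorem: ker D_i=SC_i} gives $\ker D_{r-1}=\mathcal{SC}_r$. The group is of type $F$, hence of type $F_{r+1}$. Therefore $(3)\Rightarrow(1)$ of Theorem \ref{theorem: Higher Ozawa equation is equivalent to Higher (T)} applies with $n=r$ and forces $H^r(\G,\pi)=0$ for all unitary $\pi$.

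\textbf{Main obstacle.} It remains to exhibit a unitary representation with $H^r(\G,\pi)\neq0$, which I expect to be the hardest step. I would take the trivial representation, or $\ell^2\G$. By Casselman/Borel--Wallach, the Steinberg representation of $G$ contributes: $H^r(\G,\mathrm{Ind}_\G^G\mathrm{St})$, or equivalently via Shapiro's lemma the appropriate $L^2(\G\backslash G)$-coefficients, is nonzero. Alternatively, for a torsion-free $\G$ acting on the $r$-dimensional building, the Euler characteristic is nonzero with sign $(-1)^r$, and $H^j(\G,\R)=0$ for $0<j<r$, so $H^r(\G,\R)\ne0$. The Euler characteristic argument is the cleanest route, and it is exactly the nonvanishing cited in \cite[Proposition 19]{bader-nowak}, so I would invoke that. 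For a lattice with torsion, I would pass to a finite-index torsion-free subgroup and use induction together with restriction-corestriction, since induced representations remain unitary and Shapiro's lemma transfers nonvanishing. I expect this torsion reduction to be the delicate bookkeeping point.
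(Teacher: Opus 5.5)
Your outline is the paper's proof. The first membership comes from Garland--Casselman vanishing in degree $r-1$ plus the Bader--Nowak criterion. The second comes from Hausdorffness of $H^r$ plus Theorem~\ref{Characterization of reducedness by Bader-Nowak}; the paper gets Hausdorffness from \cite[3.11]{bader-sauer} and the vanishing of $H^{r-1}$ you already have, which also covers the general simple-group case, whereas \cite[Proposition 19]{bader-nowak} is stated only for $\mathrm{PGL}$. The third comes from Theorem~\ref{theorem: ker D_i=SC_i} at $i=r$ together with $(3)\Rightarrow(1)$ of Theorem~\ref{theorem: Higher Ozawa equation is equivalent to Higher (T)}. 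Your remark on shrinking $\l$ is correct and is left implicit in the paper.

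\textbf{The gap is in the non-vanishing step, where you settle on trivial coefficients and the Euler characteristic.} From $H^0(\G,\R)=\R$ and $H^j(\G,\R)=0$ for $0<j<r$ you get $\chi(\G)=1+(-1)^r\dim H^r(\G,\R)$. For odd $r$, the sign condition $\chi<0$ does force $\dim H^r\geq 2$. For even $r$, however, $\chi>0$ tells you nothing; you would need $\chi(\G)\geq 2$. This is not merely a missing estimate: in the generality of the corollary the conclusion fails. Take a torsion-free lattice in $\mathrm{PGL}_3(\mathbb{Q}_2)$ acting simply transitively on the vertices of its building; Mumford's fake projective plane arises from such a lattice. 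The vertex links are incidence graphs of the Fano plane, so there are $1$, $7$, $7$ orbits of vertices, edges and triangles. Hence $\chi(\G)=1$ and $H^2(\G,\R)=0$.

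\textbf{Fix.} Use $\ell^2\G$, which you list but do not pursue. Since $\G$ is infinite and $H^j(\G,\ell^2\G)=0$ for $1\leq j\leq r-1$, the $\ell^2$-Betti numbers vanish outside degree $r$. Atiyah's formula and Serre's theorem $\chi(\G)\neq 0$ then give $b^{(2)}_r(\G)=(-1)^r\chi(\G)\neq 0$. Hence $\overline{H}^r(\G,\ell^2\G)\neq 0$, and a fortiori $H^r(\G,\ell^2\G)\neq 0$. The restriction of the Steinberg representation to $\G$ also works, via Shapiro's lemma and $H^r_{\mathrm{ct}}(G,\mathrm{St})\neq 0$. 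Note that the expression $\mathrm{Ind}_\G^G\mathrm{St}$ in your text does not parse.

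\textbf{Torsion.} Torsion is not a bookkeeping issue at the non-vanishing step; Shapiro's lemma handles that immediately. It matters at Theorem~\ref{theorem: ker D_i=SC_i}: a lattice with torsion is not of type $F$, so you cannot claim geometric dimension $\leq r$. Passing to a torsion-free finite index subgroup $\G'$ replaces $\R\G$ by $\R\G'$, so it proves the statement for $\G'$, not for $\G$. The corollary, like the paper's proof (which uses that $\G$ has geometric dimension $r$), should be read for torsion-free $\G$.
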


\begin{proof}
The results of Garland \cite{garland} and Casselman \cite{casselman} (see \cite{grinbaum-reizis-oppenheim} for a shorter proof) show that for any cocompact lattice $\G$ in a simple algebraic group $G$ of rank $r$ defined over a non-Archimedean local field we have
\begin{equation*}
    H^n(\G, \pi) = 0
\end{equation*}
for every unitary representation $\pi$ of $\G$ and $1 \leq n \leq r-1$. 
At the same time there exist unitary representations 
$\rho$ such that 
\begin{equation*}
    H^r(\G, \rho) \neq 0.
\end{equation*}
Bader and Sauer showed that the condition $ H^{n-1}(\G, \pi) = 0$ for every unitary representation $\pi$ of $\G$, implies that $H^n(\G, \pi)$ is Hausdorff for every unitary representation $\pi$ of $\G$ \cite[3.11]{bader-sauer}. 

The first two equations in our statement follow from Theorem \ref{Characterization of reducedness by Bader-Nowak} and \cite[Main Theorem]{bader-nowak} as they are the group algebraic translations of the corresponding vanishing and reducedness results. 

The lattice $\G$ has geometric dimension $r \geq 2$. Theorem \ref{theorem: ker D_i=SC_i} says that $\ker D_{n-1} = \mathcal{SC}_n$ for $n > r/2$ (so in particular for $n = r$) and hence Theorem \ref{theorem: Higher Ozawa equation is equivalent to Higher (T)} says that $ H^r(\G, \rho) \neq 0$ implies that for any $\l >0$ we have 
\begin{equation*}
   (\D_{r-1}^+)^2 - \l \D_{r-1}^+ \notin \S^2 \mathbb{M}_{k_{r-1}}(\R\G).
\end{equation*}
\end{proof}

\section{Higher Property $H_T$}

We now define property $H_T^n$ and give two characterizations of it in terms of equations in the group algebra (though we need closures to formulate them). The first one holds in general for all groups of type $F_{n+1}$, while the second one requires the conclusion of Theorem \ref{higher GNS - image version}.

The following definition appeared implicitly in the work of Shalom \cite{shalom-harmonic} and explicitly in \cite[2.1.1]{gotfredsen-kyed}. The following discussion is relevant in the broader context of locally compact groups, for consistency here we restrict to discrete groups.

\begin{defn}
    Let $\G$ be a discrete group and $n \in \Z_{>0}$. We say that $\G$ has \textit{property $H_T^n$} if for every unitary representation $(\pi, \H)$ of $\G$ without $\G$-invariant vectors, we have $\overline{H}^n(\G, \pi) = 0$.
\end{defn}

Property $H_T^1$ corresponds to the more classical property $H_T$ defined by Shalom \cite[p. 125]{shalom-harmonic}. For finitely generated groups, property $(T)$ is equivalent to having property $H_T$ and finite abelianization. Other examples of groups satisfying property $H_T$ are often amenable, such as finitely generated nilpotent groups and wreath products $F \wr \Z$, where $F$ is finite \cite[1.14]{shalom-harmonic}.

In a similar fashion, higher versions of property $(T)$ imply the corresponding higher versions of property $H_T$. Namely, properties $(T_n)$ and $[T_n]$ \cite{bader-sauer}, imply property $H_T^k$ for all $1 \leq k \leq n$. Finitely generated nilpotent groups satisfy property $H_T^n$ for every $n \geq 1$ \cite[10.5]{blanc} \cite[p. 243]{guichardet-book}. A discrete group of type $FP_{n+1}$ has Property $[T_n]$ defined by Bader-Sauer if and only if it has Property $H_T^k$ and satisfies $H^k(\G, \R) = 0$ for $1 \leq k \leq n$.

We come back to the usual setting: $\G$ denotes a group of type $F_n$ for some $n \geq 1$ and we use notation from previous sections. For any $k \geq 0$ we define
\begin{equation*}
    \eta_k = \mathrm{diag}(\D_0, \ldots, \D_0) \in \mathbb{M}_{k}(\R \G).
\end{equation*}

The following lemma explains the relation between the matrix $\eta_k$ and the existence of invariant vectors.

\begin{lem}\label{operator eta}
    Let $(\pi, \H)$ be a unitary representation of $\G$ and $c \in \H^{k}$. We have:
    $\langle \pi(\eta_k)c, c \rangle = 0$ if and only if $c \in (\H^{\pi(\G)})^{k}$.
\end{lem}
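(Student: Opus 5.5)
Since $\eta_k = \mathrm{diag}(\D_0,\ldots,\D_0)$ is block diagonal, the plan is to reduce to a single coordinate. Write $c=(c_1,\ldots,c_k)^T$. Then
$$\langle \pi(\eta_k)c,c\rangle_{\H^k} = \sum_{i=1}^k \langle \pi(\D_0)c_i,c_i\rangle_{\H},$$
and it suffices to show two things: each summand is nonnegative, and the $i$-th summand vanishes exactly when $c_i\in\H^{\pi(\G)}$.

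Recall that $\D_0=\D_0^+=d_0^*d_0$, where $d_0\in\mathbb{M}_{k_1\times 1}(\R\G)$ is the zeroth codifferential. Hence
$$\langle \pi(\D_0)c_i,c_i\rangle = \|\pi(d_0)c_i\|^2_{\H^{k_1}} \geq 0.$$
Because the summands are nonnegative, the total sum vanishes if and only if $\pi(d_0)c_i=0$ for every $i$.

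The last step is to identify $\ker\pi(d_0)$ with $\H^{\pi(\G)}$, which is the statement $H^0(\G,\pi)=\ker\pi(d_0)=\H^{\pi(\G)}$. Concretely, each entry of $d_0$ comes from an edge $\t_j=(\g_1 v,\g_2 v')$ of the fundamental domain. I will assume $X$ has a single vertex orbit, so $k_0=1$; otherwise one uses the $k_0$-component version, which works the same way. Under this assumption the entry has the form $\pm(\g_2-\g_1)$, so that $(\pi(d_0)c)_j=\pi(\g_2)c-\pi(\g_1)c$. If $c$ is invariant, every entry vanishes. Conversely, if every entry vanishes, then $c$ is fixed by each $\g_1^{-1}\g_2$. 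Since $X$ is connected and the action is free, these elements generate $\G$, so $c$ is $\G$-invariant.

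The expected obstacle is this identification of $\ker\pi(d_0)$ with the invariant vectors. One could instead simply cite the standard fact that the complex $\pi(d_\bullet)$ computes $H^\bullet(\G,\pi)$ (Section 2 and \cite[Proposition 1]{bader-nowak}), together with $H^0(\G,\pi)=\H^{\pi(\G)}$. Everything else in the argument is bookkeeping.
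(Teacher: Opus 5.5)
Your proof is correct and follows the paper's argument: you rewrite $\langle \pi(\eta_k)c,c\rangle$ as $\sum_i \|\pi(d_0)c_i\|^2$ and then identify $\ker\pi(d_0)$ with $\H^{\pi(\G)}$. The paper only asserts $\ker\pi(d_0)=\H^{\pi(S)}=\H^{\pi(\G)}$, where $S$ is the generating set used to define $d_0$. Your connectivity-and-freeness argument supplies that step for a general complex $X$, under the assumption $k_0=1$, which the paper also makes implicitly when it defines $\eta_k$.
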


\begin{proof}
    By writing $c = (c_1, \ldots, c_{k})^T \in \H^k$ we see that
    \begin{equation*}
        \langle \pi(\eta_k)c, c \rangle = \sum_{i=1}^k \langle \pi(\D_0)c_i, c_i \rangle = \sum_{i=1}^k || \pi(d_0)c_i||^2 \geq 0,
    \end{equation*}
    and these are equal to $0$ if and only if $c_i \in \ker \pi(d_0)$ for every $i=1, \ldots, k$. If $S$ denotes the finite generating set of $\G$ we used to define $d_0$, we have $\ker \pi(d_0) = \H^{\pi(S)} = \H^{\pi(\G)}$, hence $\langle \pi(\eta_k)c, c \rangle = 0$ if and only if $c_i \in  \H^{\pi(\G)}$ for every $i=1, \ldots, k$.
\end{proof}

We obtain the following characterization of Property $H_T^n$.

\begin{thm}\label{Characterization of higher H_T}
  Let $n \geq 1$. A discrete group $\G$ of type $F_{n+1}$ has property $H_T^n$ if and only if \begin{equation*}
      - \eta_{k_n} \in \overline{\S^2 \mathbb{M}_{k_n}(\R \G) + \R \D_n},
  \end{equation*}
  or equivalently, if and only if for every $\varepsilon >0$ there exists $R>0$ such that
  \begin{equation*}
      R \D_n - \eta_{k_n} + \varepsilon I \in \S^2 \mathbb{M}_{k_n}(\R \G).
  \end{equation*}
\end{thm}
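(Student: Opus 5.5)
The plan is to reduce both directions to statements about positive functionals and harmonic cochains, using the GNS construction of Proposition \ref{GNS for matrices of group algebras}, the Hodge isomorphism \eqref{reduced cohom = harmonic cocycles}, and Lemma \ref{operator eta}. Throughout I work in $\mathbb{M}_{k_n}(\R\G)^h$ with the algebraic topology and the order unit $I$.

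\textbf{Step 1: the two formulations agree.} I first check that the two conditions in the statement are the same. By the description of $\overline{\S^2 A + W}$ recalled in Section 1, with $W = \R\D_n$ and order unit $I$, the first condition says: for every $\varepsilon>0$ there are $\sigma\in\S^2\mathbb{M}_{k_n}(\R\G)$ and $r\in\R$ with $-\eta_{k_n}+\varepsilon I = \sigma + r\D_n$. Equivalently, $-r\D_n - \eta_{k_n} + \varepsilon I \in \S^2\mathbb{M}_{k_n}(\R\G)$. Since $\D_n = d_n^*d_n + d_{n-1}d_{n-1}^* \in \S^2$ by Lemma \ref{Squares of non-square matrices}(1), I may increase the coefficient of $\D_n$ freely. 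So $R:=\max(-r,1)>0$ works, which gives the second formulation.

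\textbf{Step 2: the condition implies $H_T^n$.} Let $\pi$ have no nonzero invariant vectors, and take $z\in\ker\pi(\D_n)$. For any $\sigma=\sum a_i^*a_i$ we have $\langle\pi(\sigma)z,z\rangle=\sum\|\pi(a_i)z\|^2\ge 0$. Applying this to $R\D_n-\eta_{k_n}+\varepsilon I$ and using $\pi(\D_n)z=0$ gives
$$\langle\pi(\eta_{k_n})z,z\rangle\le\varepsilon\|z\|^2 \quad\text{for every } \varepsilon>0.$$
Hence $\langle\pi(\eta_{k_n})z,z\rangle=0$. Lemma \ref{operator eta} then puts every entry of $z$ in $\H^{\pi(\G)}=0$, so $z=0$. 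By \eqref{reduced cohom = harmonic cocycles}, $\overline{H}^n(\G,\pi)=0$.

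\textbf{Step 3: $H_T^n$ implies the condition (contrapositive).} Suppose $-\eta_{k_n}\notin K:=\overline{\S^2\mathbb{M}_{k_n}(\R\G)+\R\D_n}$. The set $K$ is a closed convex cone in $\mathbb{M}_{k_n}(\R\G)^h$. In the algebraic topology every linear functional is continuous, so Hahn--Banach separation of a point from a closed convex cone yields a linear functional $\psi$ on the hermitian part with $\psi(K)\ge 0$ and $\psi(-\eta_{k_n})<0$. Extending $\psi$ by zero on antihermitian elements makes it a positive hermitian functional on $\mathbb{M}_{k_n}(\R\G)$. Since $\pm\D_n\in K$, we get $\psi(\D_n)=0$, and also $\psi(\eta_{k_n})>0$.

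Proposition \ref{GNS for matrices of group algebras} gives a unitary representation $\pi$ and $c\in\H^{k_n}$ with $\psi(a)=\langle\pi(a)c,c\rangle$. Then
$$0=\psi(\D_n)=\|\pi(d_n)c\|^2+\|\pi(d_{n-1})^*c\|^2,$$
so $c$ is harmonic. Now split $\H=\H^{\pi(\G)}\oplus\H_0$; the subrepresentation $\pi_0$ on $\H_0$ has no invariant vectors. The orthogonal projection $P$ onto $\H_0$ commutes with $\pi(\G)$, and applied entrywise it commutes with $\pi(\D_n)$. Hence $Pc\in\ker\pi_0(\D_n)$. Since $\pi(d_0)$ kills invariant vectors, $\pi(\eta_{k_n})$ vanishes on $(\H^{\pi(\G)})^{k_n}$, and so
$$\langle\pi(\eta_{k_n})Pc,Pc\rangle=\langle\pi(\eta_{k_n})c,c\rangle=\psi(\eta_{k_n})>0.$$
Thus $Pc\neq0$, and \eqref{reduced cohom = harmonic cocycles} gives $\overline{H}^n(\G,\pi_0)\neq0$, contradicting property $H_T^n$.

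\textbf{Main obstacle.} I expect the separation step to need the most care. One must confirm that $K$ is a convex cone and closed, and that Hahn--Banach applies in the finest locally convex (algebraic) topology. One must also check that the separating functional can be taken hermitian and nonnegative on the whole cone $K$, not merely bounded below on it; for a cone, boundedness below forces nonnegativity by scaling. The remaining steps are direct computations.
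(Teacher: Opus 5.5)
Your proposal is correct and follows essentially the same route as the paper. Sufficiency pairs the sum-of-squares certificate with a harmonic cochain and then applies the $\eta_{k_n}$ lemma. Necessity uses Hahn--Banach separation, the GNS construction, and projection onto the orthogonal complement of the invariant vectors. The paper takes your Step~1 and the cone-scaling in the separation for granted, and it obtains $Pc\neq 0$ from the lemma rather than from your direct computation of $\langle\pi(\eta_{k_n})Pc,Pc\rangle$; neither difference changes the argument.
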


\begin{proof} 

    Suppose first that $- \eta_{k_n} \in \overline{\S^2 \mathbb{M}_{k_n}(\R \G) + \R \D_n}$. Then for every $\varepsilon>0$, there exists $R>0$ such that
\begin{equation*}
    -\eta_{k_n} + \varepsilon I_{k_n} + R \D_n \in \S^2 \mathbb{M}_{k_n}(\R \G).
\end{equation*}
Let $(\pi, \H)$ be a unitary representation of $\G$ without $\G$-invariant vectors and let $c \in \ker \pi (\D_n)$. For every $\varepsilon >0$ we have
    \begin{equation*}
        \langle \pi(-\eta_{k_n} + \varepsilon I_{k_n})c, c \rangle \geq 0.
    \end{equation*}
    Hence $\langle \pi(\eta_{k_n})c, c \rangle \leq 0$. Since $\eta_{k_n} \in \S^2 \mathbb{M}_{k_n}(\R \G)$, Lemma \ref{operator eta} implies that $c \in (\H^{\pi(\G)})^{k}$. The hypothesis $\H^{\pi(\G)} = 0$ implies that $c = 0$. Equation \ref{reduced cohom = harmonic cocycles} shows that $\overline{H}^n (\G, \pi) = \ker \pi (\D_n) = \{ 0 \}$.

    Conversely suppose that $- \eta_{k_n} \notin \overline{\S^2 \mathbb{M}_{k_n}(\R \G) + \R \D_n}$. Then applying the Hahn-Banach theorem to the convex subset $ \overline{\S^2 \mathbb{M}_{k_n}(\R \G) + \R \D_n}$ in $\mathbb{M}_{k_n}(\R \G)^h$ implies that there exists a hermitian functional $\psi: \mathbb{M}_{k_n}(\R \G) \to \R$ such that
    \begin{equation*}
        \psi(- \eta_{k_n}) <0 \text{ and } \psi(\overline{\S^2 \mathbb{M}_{k_n}(\R \G) + \R \D_n}) \geq 0.
    \end{equation*}

    Since $\psi(\S^2 \mathbb{M}_{k_n}(\R \G)) \geq 0$, Proposition \ref{GNS for matrices of group algebras} gives a unitary representation $(\pi,\H)$ of $\G$ and a vector $c \in \H^{k_n}$ such that $\psi (a) = \langle \pi(a)c, c\rangle$ for every $a \in \mathbb{M}_{k_n}(\R \G)$. 
    
    Since $\psi (\eta_{k_n}) > 0$, Lemma \ref{operator eta} implies that $c \notin (\H^{\pi(\G)})^{k_n}$. This means that if $P : \H \to \H_0$ denotes the $\G$-equivariant orthogonal projection onto the quotient $\H_0:= \H / \H^{\pi(\G)}$, then in the quotient representation $(\pi_0, \H_0)$ of $(\pi, \H)$ (which has no $\G$-invariant vectors) we have $c_0 : = P(c) \neq 0$ in $\H_0^{k_n}$.

    Since $\psi(\R \D_n) \geq 0$, we necessarily have $\psi(\D_n) = 0$ as $\psi$ is linear. However, this means that
    \begin{equation*}
        0 = \psi(\D_n) = \langle \pi(\D_n)c, c\rangle = || \pi(d_{n-1}^*)c||^2 + ||\pi(d_n) c ||^2,
    \end{equation*}
    and $ \pi(d_{n-1}^*)c = \pi(d_n) c = 0$ and so $\pi (\D_n)c = 0$. Hence $\pi_0 (\D_n)c_0 = P(\pi(\D_n)c ) = 0 $. 
    
    This shows that $(\pi_0, \H_0)$ is a unitary representation of $\G$ without $\G$-invariant vectors for which $\overline{H}^n(\G, \pi_0) = \ker \pi(\D_n) \neq \{ 0 \}$ by Equation \ref{reduced cohom = harmonic cocycles}.
\end{proof}

 We now turn to another characterization of property $H_T^n$, which is precisely the higher-dimensional analogue of Ozawa's characterization of property $H_T$ \cite[Corollary 12]{ozawa-substitute}. We set

\begin{equation*}
    \square_n := d_{n}^* \eta_{k_{n+1}} d_{n} =  d_{n}^* \mathrm{diag}(\D_0, \ldots, \D_0) d_{n} \in \mathbb{M}_{k_n}(\R \G).
\end{equation*}

\begin{thm}\label{Higher analogue of Ozawa's characterization}
Let $\G$ be a discrete group of type $F_{n+1}$ for some $n \geq 1$, satisfying $\mathcal{SC}_n = \ker D_{n-1}$. Then $\G$ has property $H_T^{n}$ if and only if for every $\varepsilon >0$ there exists $R>0$ such that
  \begin{equation*}
      R (\D_{n-1}^+)^2 - \square_{n-1} + \varepsilon \D_{n-1}^+ \in \S^2 \mathbb{M}_{k_{n-1}}(\R \G).
  \end{equation*}
\end{thm}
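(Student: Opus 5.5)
The plan is to treat this as the ``pushed-down'' version of Theorem \ref{Characterization of higher H_T}, in the same way that condition (3) in Theorem \ref{theorem: Higher Ozawa equation is equivalent to Higher (T)} is the pushed-down version of (2). The bridge between harmonic cochains on the $n$-th level and positive functionals on $\operatorname{im} D_{n-1}$ will be the higher cocycle GNS, Theorem \ref{higher GNS - image version}. Throughout, the key identities are
\begin{equation*}
  D_{n-1}(\D_n^+) = 0,\quad D_{n-1}(\D_n) = (\D_{n-1}^+)^2,\quad D_{n-1}(I) = \D_{n-1}^+,\quad D_{n-1}(\eta_{k_n}) = \square_{n-1}.
\end{equation*}

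\emph{($\Leftarrow$).} Let $(\pi,\H)$ have no invariant vectors and let $z\in\ker\pi(\D_n)$. Then $z$ is a cocycle, and Theorem \ref{higher GNS - image version} (2), using $\mathcal{SC}_n=\ker D_{n-1}$, gives a positive functional $\phi$ on $\operatorname{im} D_{n-1}$ with $\phi(d_{n-1}^*ad_{n-1})=\langle\pi(a)z,z\rangle$. The element $R(\D_{n-1}^+)^2-\square_{n-1}+\varepsilon\D_{n-1}^+$ equals $D_{n-1}(R\D_n-\eta_{k_n}+\varepsilon I)$, so it lies in $\operatorname{im}D_{n-1}$. By hypothesis it is also a sum of squares, so $\phi$ is nonnegative on it. This gives
\begin{equation*}
  0\le R\langle\pi(\D_n)z,z\rangle-\langle\pi(\eta_{k_n})z,z\rangle+\varepsilon\|z\|^2=-\langle\pi(\eta_{k_n})z,z\rangle+\varepsilon\|z\|^2
\end{equation*}
for every $\varepsilon>0$. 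Hence $\langle\pi(\eta_{k_n})z,z\rangle=0$, so Lemma \ref{operator eta} puts $z$ in $(\H^{\pi(\G)})^{k_n}=0$. By (\ref{reduced cohom = harmonic cocycles}), $\overline H^n(\G,\pi)=0$.

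\emph{($\Rightarrow$).} I argue by contraposition with Hahn--Banach, working inside the finite-dimensional-in-spirit but algebraically topologized space $(\operatorname{im} D_{n-1})^h$. Suppose that for some $\varepsilon>0$ no $R$ works. The cone
\begin{equation*}
  C=\bigl(\S^2\mathbb{M}_{k_{n-1}}(\R\G)\cap\operatorname{im}D_{n-1}\bigr)+\R(\D_{n-1}^+)^2
\end{equation*}
is convex and avoids $-\square_{n-1}+\varepsilon\D_{n-1}^+$. By Proposition \ref{SOS in Im D_k are images of SOS}, the first summand equals $D_{n-1}(\S^2\mathbb{M}_{k_n}(\R\G))$. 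I would rather pull everything back to $\mathbb{M}_{k_n}(\R\G)$, where an order unit is available. The assumption says that $-\eta_{k_n}+\varepsilon I\notin \S^2\mathbb{M}_{k_n}(\R\G)+\R\D_n+\ker D_{n-1}$. Indeed, if it were in this set, applying $D_{n-1}$ would give the forbidden membership; conversely, a membership in $\operatorname{im} D_{n-1}$ lifts modulo $\ker D_{n-1}$, using Proposition \ref{SOS in Im D_k are images of SOS} to lift the square part. Since $I$ is an order unit, $-\eta_{k_n}+\tfrac{\varepsilon}{2}I$ is then not in the closure of $\S^2\mathbb{M}_{k_n}+\R\D_n+\ker D_{n-1}$.

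Separating gives a hermitian $\psi$ on $\mathbb{M}_{k_n}(\R\G)$ that is nonnegative on squares, vanishes on $\R\D_n$ and on $\ker D_{n-1}$, and satisfies $\psi(\eta_{k_n})>\tfrac{\varepsilon}{2}\psi(I)\ge0$. By Proposition \ref{higher GNS} (1) (note $\operatorname{im}D_n\subseteq\ker D_{n-1}$) or Proposition \ref{GNS for matrices of group algebras}, $\psi(a)=\langle\pi(a)c,c\rangle$, and $\psi(\D_n)=0$ forces $\pi(\D_n)c=0$. Now I project onto the orthogonal complement of $\H^{\pi(\G)}$, exactly as in the proof of Theorem \ref{Characterization of higher H_T}. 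Since $\psi(\eta_{k_n})>0$, Lemma \ref{operator eta} shows the projection $c_0$ is nonzero. It is still harmonic because the projection is equivariant. This violates $H^n_T$.

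\textbf{Main obstacle.} The delicate step is the translation between closures and cones in $\operatorname{im}D_{n-1}$ and those in $\mathbb{M}_{k_n}(\R\G)$. In particular, the quantifier ``for every $\varepsilon$ exists $R$'' does not require a closure, but Hahn--Banach needs one, which I handle by halving $\varepsilon$ via the order unit $I$. The other delicate point is the use of Proposition \ref{SOS in Im D_k are images of SOS} to lift sums of squares through $D_{n-1}$.
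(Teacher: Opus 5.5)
Your proof is correct and follows the paper. The ($\Leftarrow$) direction is identical to the paper's. Your ($\Rightarrow$) direction is the contrapositive of the paper's argument, with the Hahn--Banach half of Theorem \ref{Characterization of higher H_T} re-proved inline.

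The paper instead cites that theorem directly to get $R\D_n-\eta_{k_n}+\varepsilon I\in\S^2\mathbb{M}_{k_n}(\R\G)$. It then applies $D_{n-1}$, using only $D_{n-1}(\S^2\mathbb{M}_{k_n}(\R\G))\subseteq\S^2\mathbb{M}_{k_{n-1}}(\R\G)$ from Lemma \ref{Squares of non-square matrices}(1). As a result, the $\ker D_{n-1}$ summand, the lifting via Proposition \ref{SOS in Im D_k are images of SOS}, and the halving of $\varepsilon$ that you flagged as the main obstacle are not actually needed.
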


\begin{proof}
    If the group $\G$ has property $H_T^n$, then, by  Theorem \ref{Characterization of higher H_T}, 
    for every $\varepsilon >0$, there exists $R >0$ such that we have
    \begin{equation*}
          R \D_n - \eta_{k_{n}} + \varepsilon I \in \S^2 \mathbb{M}_{k_{n}}(\R \G).
    \end{equation*}
    applying the map $D_{n-1}$ to this equation gives
    \begin{equation*}
        R (\D_{n-1}^+)^2 - \square_{n-1} + \varepsilon \D_{n-1}^+ \in D_{n-1}(\S^2 \mathbb{M}_{k_n}(\R \G)) \subseteq \S^2 \mathbb{M}_{k_{n-1}}(\R \G).
    \end{equation*}
    Conversely, suppose that for every $\varepsilon >0$, there exists $R >0$ such that 
  \begin{equation*}
      R (\D_{n-1}^+)^2 - \square_{n-1} + \varepsilon \D_{n-1}^+ \in \S^2 \mathbb{M}_{k_{n-1}}(\R \G).
  \end{equation*}
    Let $\pi$ be a unitary representation without $\G$-invariant vectors and $b \in \ker \pi (\D_{n})$ be a harmonic cocycle. Theorem \ref{higher GNS - image version} $(2)$ gives a hermitian functional $\phi:\operatorname{im} D_{n-1} \to \R$ such that for every $a \in  \mathbb{M}_{k_n}(\R \G) $ we have
\begin{equation*}
    \phi(d_{n-1}^* a d_{n-1}) = \langle \pi(a) b, b \rangle.
\end{equation*}
Fix $\varepsilon>0$ and choose $R>0$ so that $R (\D_{n-1}^+)^2 - \square_{n-1} + \varepsilon \D_{n-1}^+ \in  \S^2 \mathbb{M}_{k_{n-1}}(\R \G)$. Then
\begin{equation*}
    \phi(- \square_{n-1} + \varepsilon \D_{n-1}^+) = \langle \pi( R \D_{n} - \eta_{k_{n}} + \varepsilon I) b, b \rangle \geq 0.
\end{equation*}
Since $\square_{n-1} \in \S^2 \mathbb{M}_{k_{n-1}}(\R \G) $, we have $0 \leq \phi (\square_{n-1}) \leq \varepsilon \phi (\D_{n-1}^+)$. Since this holds for every $\varepsilon >0$, we have $  \langle \pi(\eta_{k_n}) b, b \rangle = \phi (\square_{n-1}) = 0$. By Lemma \ref{operator eta}, this means that $b \in (\H^{\pi(\G)})^{k_n}$. Since $\pi$ has no invariant vectors, we have $b= 0$. By Equation \ref{reduced cohom = harmonic cocycles} we have $\overline{H}^n(\G, \pi) = \ker \pi (\D_n) = 0$, hence $\G$ has property $H_T^n$.

\end{proof}

\section{Remarks and questions}

\subsection{Matricial homology and ideals}
In the above considerations it is natural to ask when do we have
$\ker D_{n-1}=\operatorname{im} D_n$. 
This property is equivalent to the fact that $\Delta_{n}^+=d_n^*d_n$
is an order unit in $\ker D_{n-1}$.
It was observed by P. Mizerka and the second author that in 
such a case $\ker D_{n-1}$ is an ideal in $\mathbb{M}_{k_n}(\R\G)$, that
plays a similar role as the augmentation ideal. 
However, there are simple examples where we observe that this is not the case.

\begin{example}
Let $\G =\Z^2 = \langle s, t \, | \, st= ts \rangle$ acting on its simply-connected 2-dimensional Cayley complex $X$. The number of $\G$-orbits on each dimension are $k_0 = 1, k_1 = 2$ and $k_2 = 1$. The codifferentials can be taken to be
\begin{equation*}
    d_0 = \begin{pmatrix}
        1-s \\
        1- t
    \end{pmatrix}, \quad d_1 = (1- t, s-1 )
\end{equation*}
(where the computation for $d_1$ was obtained using its characterization as Fox derivatives of relations by generators, as in \cite{kaluba-mizerka-nowak}).
If $a = \begin{pmatrix}
    a_{ss} & a_{st} \\
    a_{ts} & a_{tt}
\end{pmatrix} \in \mathbb{M}_2(\R \G)$, we let $D_0 (a) = d_0^* a d_0$. 
Then  $$\ker D_0 \neq \Im D_1.$$
Indeed, the condition $a \in \ker D_0$ means
\begin{align*}
    0 = d_0^* a d_0 & = (1-s^{-1}) a_{ss} (1-s) + (1-s^{-1})a_{st} (1-t) \\
    &+ (1-t^{-1})a_{ts} (1-s) + (1-t^{-1})a_{tt}(1-t).
\end{align*}
On the other hand for $\xi \in \R\G$ we have
\begin{align*}
    D_1(\xi) & = d_1^* \xi d_1 = \begin{pmatrix}
        1- t^{-1} \\
        s^{-1} -1
    \end{pmatrix} \xi \begin{pmatrix}
        1 -t & s^{-1}- 1
    \end{pmatrix} \\
    &  = \begin{pmatrix}
        (1-t^{-1}) \xi (1-t) &  (1-t^{-1}) \xi (s-1) \\
        (s^{-1} - 1) \xi (1-t) & (s^{-1} - 1) \xi (s-1)
    \end{pmatrix}.
\end{align*}
Hence if we choose a matrix $a\in \mathbb{M}_2(\R \G)$ of the form \begin{equation*}
    a = \begin{pmatrix}
        (1-t^{-1}) \xi (1-t) &  0 \\
        0  & (s^{-1} - 1) \xi (1-s)
    \end{pmatrix},
\end{equation*}
we obtain a matrix $a$ lying in $\ker D_0$ but outside of $\Im D_1$.
\end{example}

In view of this example, the natural question to ask is the following.

\begin{ques}
    Does there exist an infinite group $\G$ of type $F_{n+1}$ for some $n \in \na$ such that $\ker D_{n-1} = \Im D_n$?
\end{ques}

\subsection{Algebraic characterizations of reducedness}

Theorem \ref{theorem: Higher Ozawa equation is equivalent to Higher (T)} implies that elements of the form $(\D_{n-1}^+)^2 - \l \D_{n-1}^+$ do not provide an algebraic characterization of reducedness of cohomology. 
Nevertheless, reducedness could still be characterized by a similar condition involving other elements in the group algebra.

\begin{ques}
   Does there exist $a \in \mathbb{M}_k(\R \G)$ (or a sequence $(a_n)_{n \in \na}$ with $a_n \in \mathbb{M}_k(\R \G)$) such that the property that $H^n(\G, \pi)$ is Hausdorff for every unitary representation is equivalent to $a \in \S^2 \mathbb{M}_k(\R \G)$? (or that there exists $n \in \na$ such that $a_n \in \S^2 \mathbb{M}_k(\R \G)$?)
\end{ques}

\subsection{Split conjugation spaces and matricial homology}

We expect the conclusion of Theorem \ref{theorem: ker D_i=SC_i} to hold in greater generality.

\begin{ques}
    Let $\G$ be a group of type $F_{n+1}$. Do we have $\ker D_{n-1} = \mathcal{SC}_n$ in all dimensions?
\end{ques}

The condition $\ker D_{n-1} = \mathcal{SC}_n$ is our main tool to prove Theorem \ref{higher GNS - image version}. Even if its conclusion always holds for $n  = 1$, we do not know whether $\ker D_{0} = \mathcal{SC}_1$ holds for finitely presented groups.

\bibliographystyle{amsalpha}
\bibliography{refs.bib}

\noindent Antonio López Neumann \\
Université Paris Cité, Sorbonne Université, CNRS, IMJ-PRG, F-75013 Paris, France \\
lopezneumann@imj-prg.fr \\

\noindent Piotr W. Nowak \\
Institute of Mathematics of the Polish Academy of Sciences (IMPAN), Warsaw \\ 00-656 Warsaw, Poland \\
pnowak@impan.pl \\

\end{document}